\newtheorem{theorem}{Theorem}[section]
\newtheorem{question}{Question}
\newtheorem{corollary}{Corollary}[section]
\newtheorem{definition}{Definition}[section]
\newtheorem{lemma}{Lemma}[section]
\newtheorem{remark}{Remark}[section]
\newenvironment{proof}[1][Proof]{\noindent \textbf{#1.} }{\  \rule{0.5em}{0.5em}}
\begin{document}

\title{Perturbing eigenvalues of nonnegative centrosymmetric matrices\thanks{
Supported by Universidad Cat\'{o}lica del Norte-VRIDT 036-2020, N\'UCLEO UCN VRIDT-083-2020, Chile, ANID-Subdirecci\'on de Capital Humano/Doctorado Nacional/2021-21210056, Chile. }}
\date{}
\author{Roberto C. D\'iaz $^{a}$\thanks{
Corresponding author} Ana I. Julio$^{b}$, Yankis R. Linares$^{b}$
 \\
$^{a}${\small Departamento de Matem\'aticas, Universidad de La Serena} \\
{\small Cisternas 1200, La Serena, Chile.} \\
$^{b}${\small Departamento de Matem\'{a}ticas, Universidad Cat\'{o}lica del
Norte }\ \\
{\small Casilla 1280, Antofagasta, Chile.}}
\maketitle

\begin{abstract}
An $n\times n$ matrix $C$ is said to be {\it centrosymmetric} if it satisfies the relation $JCJ=C$, where $J$ is the $n\times n$ counteridentity matrix. Centrosymmetric matrices have a rich eigenstructure that has been studied extensively in the literature. Many results for centrosymmetric matrices have been generalized to wider classes of matrices that arise in a wide variety of disciplines. In this paper, we obtain interesting spectral properties for nonnegative centrosymmetric matrices. We show how to change one single eigenvalue, two or three eigenvalues of an $n\times n$ nonnegative centrosymmetric matrix without changing any of the remaining eigenvalues neither nonnegativity nor the centrosymmetric structure. Moreover, our results allow partially answer some known questions given by Guo \cite{Guo} and by Guo and Guo \cite{Guo1}. Our proofs generate algorithmic procedures that allow to compute a solution matrix.
\end{abstract}

\textit{AMS classification: 15A18, 15A29, 15A42}

\textit{Key words: Nonnegative matrices, Inverse eigenvalue problem, Centrosymmetric matrices, Spectral perturbation, Guo's results.}

\section{Introduction}
An $n\times n$ matrix $C=[c_{ij}]$ is said to be {\it centrosymmetric} if its entries satisfy the relation $c_{i,j}=c_{n-i+1,n-j+1}$ or equivalently if $JCJ=C$, where $J=\begin{bmatrix}\mathbf{e}_{n} \ \cdots \ \mathbf{e}_{1}\end{bmatrix}$ is the $n\times n$ counteridentity matrix. Note that $J^{\textsuperscript{T}}=J$ and $J^{\textsuperscript{2}}=I$. Centrosymmetric matrices have a rich eigenstructure that has been studied extensively in the literature (\cite{Abu, Abu2, Alan, Cent, Good, Press, Weaver}). Many results for centrosymmetric matrices have been generalized to wider classes of matrices that arise in a wide variety of disciplines such as: Differential equations, statistics, physics, communication theory, numerical analysis, engineering, etc (for more details, we refer the reader to \cite{Chu, Gene, Hill, Onper, Lin}). \\

Throughout this paper we will use the following terminology and notation: An $n\times n$ matrix $A$ with real entries is said to be {\it nonnegative} if each of its entries are nonnegative. At this case, we will write $A\geq 0$. In general, for $m\times n$ real matrices $A, B$, the notation $A\geq B$ means that the inequalities hold entrywise. $\sigma(A)$ and $\rho(A)$ will denote the spectrum and spectral radius (leading eigenvalue) of $A$, respectively. A list $\Lambda=\{\lambda_{1},\ldots,\lambda_{n}\}$ of complex numbers which is closed under complex conjugation is said to be {\it realizable} if $\Lambda=\sigma(A)$ for some $n\times n$ nonnegative matrix $A$ and we will assume in this paper that $\rho(A)=\lambda_{1}:=\max\limits_{i=1,\ldots,n}\vert \lambda_{i}\vert$, which is called {\it Perron eigenvalue}. The {\it nonnegative inverse eigenvalue problem} (NIEP) is the problem of characterising all realizable lists. In terms of $n$, the NIEP is completely solved only for $n\leq 4$. A number of sufficient conditions for the problem to have a solution are known for $n\geq 5$. For an elaborate exposition on the history of the NIEP we refer the reader to \cite{NIEP}.\\
We will denote by $\mathbb{N}_{n}$ the collection of all the lists of $n$ complex numbers which are realized by a nonnegative matrix. Similarly, denote by $\mathcal{CR}_{n} \ (\widehat{\mathcal{CR}}_{n})$ the collection of all the lists of $n$ complex numbers which are realized by a nonnegative (positive) centrosymmetric matrix. Denote by $A^{\textsuperscript{T}}$ the transpose of a matrix $A$, by $\mathbf{e}_{k}$ the $n-$dimensional vector with one in the $k-$th position and zeros elsewhere.\\

An $n\times n$ real matrix $A=[a_{ij}]$ is said to be {\it constant row sums} if all its rows sums up to the same constant $\alpha$, i.e. $\sum\limits_{j=1}^{n}a_{ij}=\alpha$, for all $i=1,\ldots,n$. The set of all $n\times n$ real matrices with constant row sums equal to $\alpha \in\mathbb{R}$ will be denoted by $\mathcal{CS}_{\alpha}$. It is clear that $\mathbf{e}^{\textsuperscript{T}}=\begin{bmatrix}1 \ \cdots \ 1\end{bmatrix}$ is an eigenvector of any matrix $A\in\mathcal{CS}_{\alpha }$, corresponding to the eigenvalue $\alpha$. \\ \ \\
This paper is devoted to the determination of spectral perturbation properties for nonnegative centrosymmetric matrices, which are motivated by some known spectral properties for general nonnegative matrices. Fiedler \cite{Fiedler} gave some interesting spectral properties for nonnegative symmetric matrices. Guo \cite{Guo} obtained some similar results for general nonnegative matrices. In what follows, we establish some perturbation results, Theorems \ref{Brauer} to \ref{Guo2} (below), which will play an important role in our work. Theorem \ref{Brauer} shows how to change one single eigenvalue of an $n\times n$ arbitrary matrix without changing any of the remaining eigenvalues, while Theorems \ref{Guo} to \ref{Guo2} show how to change two or three eigenvalues of an $n\times n$ nonnegative matrix without changing any of the remaining eigenvalues or nonnegativity:

\begin{theorem}{\rm\cite[Brauer]{Brauer}}\label{Brauer} Let $A$ be an $n\times n$ arbitrary matrix with eigenvalues $\lambda_{1},\ldots,\lambda_{n}$. Let $\mathbf{v}^{\textsuperscript{T}}=\begin{bmatrix}v_{1} \ \cdots \ v_{n}\end{bmatrix}$ be an eigenvector of $A$ corresponding to the eigenvalue $\lambda_{k}$, and let $\mathbf{q}$ be any $n-$dimensional vector. Then $A+\mathbf{vq^{\textsuperscript{T}}}$ has eigenvalues $\lambda_{1},\ldots,\lambda_{k-1},\lambda_{k}+\mathbf{v^{\textsuperscript{T}}q},\lambda_{k+1},\ldots,\lambda_{n}$.
\end{theorem}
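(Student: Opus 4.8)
The plan is to reduce the perturbed matrix to a block-triangular form by a similarity transformation that sends the eigenvector $\mathbf{v}$ to the first coordinate axis. Since $\mathbf{v}\neq\mathbf{0}$, I would extend it to a basis of $\mathbb{C}^{n}$ and let $S$ be the invertible $n\times n$ matrix whose first column is $\mathbf{v}$; then $S\mathbf{e}_{1}=\mathbf{v}$, so $S^{-1}\mathbf{v}=\mathbf{e}_{1}$. Computing the first column of $B:=S^{-1}AS$ gives $S^{-1}A\mathbf{v}=\lambda_{k}S^{-1}\mathbf{v}=\lambda_{k}\mathbf{e}_{1}$, so $B$ has the block-triangular shape $\begin{bmatrix}\lambda_{k} & \ast \\ \mathbf{0} & A'\end{bmatrix}$, where the $(n-1)\times(n-1)$ block $A'$ carries the remaining eigenvalues $\lambda_{1},\ldots,\lambda_{k-1},\lambda_{k+1},\ldots,\lambda_{n}$.

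Next I would conjugate the perturbed matrix by the same $S$, obtaining $S^{-1}(A+\mathbf{v}\mathbf{q}^{T})S=B+(S^{-1}\mathbf{v})(\mathbf{q}^{T}S)=B+\mathbf{e}_{1}\mathbf{r}^{T}$, where $\mathbf{r}^{T}:=\mathbf{q}^{T}S$. The rank-one term $\mathbf{e}_{1}\mathbf{r}^{T}$ alters only the first row, so the perturbed matrix retains the block-triangular form with the block $A'$ untouched; only the $(1,1)$ entry changes from $\lambda_{k}$ to $\lambda_{k}+r_{1}$, where $r_{1}=\mathbf{q}^{T}S\mathbf{e}_{1}=\mathbf{q}^{T}\mathbf{v}=\mathbf{v}^{T}\mathbf{q}$. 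Reading off the eigenvalues of a block-triangular matrix and invoking the similarity invariance of the spectrum then yields exactly $\lambda_{1},\ldots,\lambda_{k-1},\lambda_{k}+\mathbf{v}^{T}\mathbf{q},\lambda_{k+1},\ldots,\lambda_{n}$.

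As an alternative, and as a cross-check, I would compute the characteristic polynomial directly via the matrix determinant lemma: for $\lambda\neq\lambda_{k}$ the matrix $\lambda I-A$ is invertible and $\det(\lambda I-A-\mathbf{v}\mathbf{q}^{T})=\det(\lambda I-A)\,(1-\mathbf{q}^{T}(\lambda I-A)^{-1}\mathbf{v})$. Since $A\mathbf{v}=\lambda_{k}\mathbf{v}$ gives $(\lambda I-A)^{-1}\mathbf{v}=(\lambda-\lambda_{k})^{-1}\mathbf{v}$, the right-hand side collapses to $(\lambda-\lambda_{k}-\mathbf{v}^{T}\mathbf{q})\prod_{i\neq k}(\lambda-\lambda_{i})$, and the identity extends to $\lambda=\lambda_{k}$ because both sides are polynomials in $\lambda$.

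I do not expect a serious obstacle, since this is the classical Brauer perturbation and the work is essentially bookkeeping. In the similarity approach the one point demanding care is verifying that the block $A'$ genuinely carries the complementary eigenvalues, which follows from $\det(\lambda I-B)=(\lambda-\lambda_{k})\det(\lambda I-A')$ together with the similarity invariance of the characteristic polynomial; in the determinant approach the corresponding care is needed to pass from the punctured identity valid for $\lambda\neq\lambda_{k}$ to a genuine polynomial identity holding for all $\lambda$.
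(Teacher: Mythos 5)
The paper does not prove this statement at all: Theorem \ref{Brauer} is quoted verbatim from Brauer's 1952 paper and used as a black box, so there is no in-paper proof to compare against. Your argument is correct and complete. The similarity route (conjugating by an $S$ whose first column is $\mathbf{v}$, so that both $S^{-1}AS$ and $S^{-1}(A+\mathbf{v}\mathbf{q}^{\textsuperscript{T}})S$ are block upper triangular with the same lower block $A'$) is clean and is essentially the standard modern proof; the determinant-lemma computation is the one closer in spirit to Brauer's original characteristic-polynomial argument. One small imprecision in the second route: $\lambda I-A$ is invertible for $\lambda$ outside the whole spectrum of $A$, not merely for $\lambda\neq\lambda_{k}$, so the identity
\begin{equation*}
\det\bigl(\lambda I-A-\mathbf{v}\mathbf{q}^{\textsuperscript{T}}\bigr)=\bigl(\lambda-\lambda_{k}-\mathbf{v}^{\textsuperscript{T}}\mathbf{q}\bigr)\prod_{i\neq k}(\lambda-\lambda_{i})
\end{equation*}
is first established on the complement of the finite set $\{\lambda_{1},\ldots,\lambda_{n}\}$ and then extended to all $\lambda$ by the polynomial identity principle; the conclusion is unaffected. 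For the purposes of this paper it is also worth observing that your first construction makes explicit why the perturbation preserves the eigenvectors of $A$ for the unperturbed eigenvalues only up to the similarity, which is consistent with how the authors repeatedly apply the theorem with $\mathbf{v}=\mathbf{e}$ for matrices in $\mathcal{CS}_{\lambda_{1}}$.
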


\begin{theorem}{\rm \cite[Theorem 3.1]{Guo}}\label{Guo}
If $\Lambda=\{\lambda_{1},\lambda_{2},\lambda_{3},\ldots,\lambda_{n}\}\in \mathbb{N}_{n}$ and $\lambda_{2}$ is a real number, then for all $t\geq0$ the lists
\begin{align*}
\Lambda^{+}_{t}&=\{\lambda_{1}+t,\lambda_{2}+t,\lambda_{3},\ldots,\lambda_{n}\}\in \mathbb{N}_{n} \\
\Lambda^{-}_{t}&=\{\lambda_{1}+t,\lambda_{2}-t,\lambda_{3},\ldots,\lambda_{n}\}\in \mathbb{N}_{n}.
\end{align*}
\end{theorem}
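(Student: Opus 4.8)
The plan is to realize $\Lambda$ by a conveniently normalized nonnegative matrix and then to move $\lambda_{1}$ and $\lambda_{2}$ one at a time by two successive applications of Brauer's Theorem \ref{Brauer}, each a rank-one update along an eigendirection. Starting from any $A\geq 0$ with $\sigma(A)=\Lambda$, I would first reduce to the case in which $A$ is irreducible and lies in $\mathcal{CS}_{\lambda_{1}}$: by a density argument one may perturb $A$ to an irreducible matrix, whose positive Perron eigenvector $\mathbf{x}>0$ yields, via the diagonal similarity $D^{-1}AD$ with $D=\operatorname{diag}(\mathbf{x})$, a cospectral nonnegative matrix with constant row sums $\lambda_{1}$, so that $\mathbf{e}^{T}=[1\ \cdots\ 1]$ is a right Perron eigenvector; the general statement is then recovered from the closedness of the realizable lists. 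Let $\mathbf{u}>0$ be the left Perron eigenvector and $\mathbf{y}$ a real eigenvector for $\lambda_{2}$. Since $(\lambda_{1}-\lambda_{2})\mathbf{u}^{T}\mathbf{y}=\mathbf{u}^{T}A\mathbf{y}-\lambda_{2}\mathbf{u}^{T}\mathbf{y}=0$ and $\mathbf{u}>0$, the vector $\mathbf{y}$ must have entries of both signs, a fact that will be decisive.

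Step one raises the Perron root. I would choose $\mathbf{a}\geq 0$ with $\mathbf{a}^{T}\mathbf{e}=t$ and $\mathbf{a}^{T}\mathbf{y}=0$, and set $A_{1}=A+\mathbf{e}\mathbf{a}^{T}$. Since $\mathbf{e}\mathbf{a}^{T}\geq 0$, the matrix $A_{1}$ stays nonnegative; because $\mathbf{a}^{T}\mathbf{y}=0$, the vector $\mathbf{y}$ remains an eigenvector of $A_{1}$ for $\lambda_{2}$, while Theorem \ref{Brauer} applied with the Perron eigenvector $\mathbf{e}$ gives $\sigma(A_{1})=\{\lambda_{1}+t,\lambda_{2},\lambda_{3},\ldots,\lambda_{n}\}$. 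Step two moves the second eigenvalue: I would set $A_{2}=A_{1}+\mathbf{y}\mathbf{b}^{T}$ and apply Theorem \ref{Brauer} with the eigenvector $\mathbf{y}$, obtaining $\sigma(A_{2})=\{\lambda_{1}+t,\lambda_{2}+\mathbf{y}^{T}\mathbf{b},\lambda_{3},\ldots,\lambda_{n}\}$ for \emph{every} $\mathbf{b}$, so that $\lambda_{1}+t$ and the remaining $\lambda_{i}$ are left untouched. Taking $\mathbf{y}^{T}\mathbf{b}=t$ produces $\Lambda_{t}^{+}$ and $\mathbf{y}^{T}\mathbf{b}=-t$ produces $\Lambda_{t}^{-}$.

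The main obstacle is the nonnegativity of $A_{2}$: the term $\mathbf{y}\mathbf{b}^{T}$ has entries of both signs, so its negative part must be absorbed by the positive entries of $A_{1}$, yet the constraint $\mathbf{y}^{T}\mathbf{b}=\pm t$ forces $\mathbf{b}$ to have size of order $t$. The resolution, and the reason step one must precede step two, is that adding $\mathbf{e}\mathbf{a}^{T}$ increases every entry of column $l$ by $a_{l}$, creating room in precisely the columns where $\mathbf{b}$ is placed. Quantitatively, requiring $(A_{1})_{kl}+y_{k}b_{l}\geq 0$ for all $k$ confines each $b_{l}$ to an interval whose half-widths are bounded below by $\min_{k}(A_{1})_{kl}/|y_{k}|\geq a_{l}/\max_{k}|y_{k}|$; summing the extreme admissible values of $\sum_{l}y_{l}b_{l}$ shows the reachable range of $\mathbf{y}^{T}\mathbf{b}$ contains $[-R,R]$ with $R\geq \mathbf{a}^{T}\mathbf{e}=t$, provided $\mathbf{a}$ is concentrated on one coordinate attaining $\max_{y_{l}>0}y_{l}$ and one attaining $\max_{y_{l}<0}|y_{l}|$ (a choice that simultaneously forces $\mathbf{a}^{T}\mathbf{y}=0$). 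Hence $\pm t$ is attainable for every $t\geq 0$, and both $\Lambda_{t}^{+}$ and $\Lambda_{t}^{-}$ are realized; I expect verifying this room estimate to be the delicate computational heart of the argument.

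Finally I would dispose of the degenerate situations. If $\lambda_{2}=\lambda_{1}$, the pair is either split (for $\Lambda_{t}^{-}$) or shifted together (for $\Lambda_{t}^{+}$) by acting on the two-dimensional Perron subspace; and if $\lambda_{2}$ is repeated and non-semisimple, so that no honest eigenvector is available, I would perturb $A$ to make $\lambda_{2}$ simple, run the construction, and pass to the limit using the closedness of the realizable lists. The entire procedure is constructive, producing $\mathbf{a}$ and $\mathbf{b}$ explicitly and hence an algorithm for the solution matrix $A_{2}$.
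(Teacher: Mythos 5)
First, a point of comparison: the paper does not prove this statement at all. It is imported verbatim from Guo's 1997 paper as \cite[Theorem 3.1]{Guo} and used as a black box (together with the monotonicity property $X_{\pm t}\geq Y$ recorded in Remark \ref{R1}), so there is no internal proof to measure you against. Your architecture --- normalize to $\mathcal{CS}_{\lambda_{1}}$, then perform two successive Brauer rank-one updates, first $A_{1}=A+\mathbf{e}\mathbf{a}^{\textsuperscript{T}}$ with $\mathbf{a}\geq 0$, $\mathbf{a}^{\textsuperscript{T}}\mathbf{e}=t$, $\mathbf{a}^{\textsuperscript{T}}\mathbf{y}=0$ to lift $\lambda_{1}$ while preserving $\mathbf{y}$ as an eigenvector, then $A_{2}=A_{1}+\mathbf{y}\mathbf{b}^{\textsuperscript{T}}$ to move $\lambda_{2}$ --- is the right idea and is essentially the known route to this result.

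However, the step you yourself flag as ``the delicate computational heart'' is wrong as written. Let $y_{p}=\max_{k}y_{k}>0$ and $y_{q}=\min_{k}y_{k}<0$, and put $\mathbf{a}=a_{p}\mathbf{e}_{p}+a_{q}\mathbf{e}_{q}$ with $a_{p}+a_{q}=t$, $a_{p}y_{p}+a_{q}y_{q}=0$. Your symmetric bound $|b_{l}|\leq a_{l}/\max_{k}|y_{k}|$ does \emph{not} give a reachable range containing $[-t,t]$: it gives $R=\dfrac{2\,y_{p}|y_{q}|}{\max(y_{p},|y_{q}|)\,(y_{p}+|y_{q}|)}\,t$, which is strictly less than $t$ unless $y_{p}=|y_{q}|$. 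The constraint $a_{l}+y_{k}b_{l}\geq 0$ for all $k$ is asymmetric in the sign of $b_{l}$: it reads $-a_{l}/y_{p}\leq b_{l}\leq a_{l}/|y_{q}|$. With these correct interval endpoints one computes that the extreme values of $\mathbf{y}^{\textsuperscript{T}}\mathbf{b}=b_{p}y_{p}+b_{q}y_{q}$ over the admissible box are exactly $+t$ (at $b_{p}=a_{p}/|y_{q}|$, $b_{q}=-a_{q}/y_{p}$) and $-t$ (at $b_{p}=-a_{p}/y_{p}$, $b_{q}=a_{q}/|y_{q}|$), using $a_{p}y_{p}=a_{q}|y_{q}|$. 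So the conclusion survives, but only after replacing your estimate; as stated, the proof does not close. Two further loose ends deserve more than a wave: the claim that $\mathbf{y}$ has entries of both signs needs $\mathbf{u}>0$, hence irreducibility, and the irreducible approximation changes the spectrum --- in particular, when $\lambda_{2}$ is a repeated real eigenvalue the approximants may only have a nearby \emph{complex} pair, so the ``run the construction and pass to the limit'' step is not automatic; and the case $\lambda_{2}=\lambda_{1}$ (which forces reducibility after normalization) is not actually handled by ``acting on the two-dimensional Perron subspace.'' These are all repairable, but they are precisely where the real work of Guo's theorem lives.
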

Guo \cite{Guo} established the following two questions:
\begin{question}\label{Q1}
\rm{For any $\{\lambda_{1},\lambda_{2},\lambda_{3},\ldots,\lambda_{n}\}\in \mathbb{S}_{n}$ and $t\geq 0$, whether or not $\{\lambda_{1}+t,\lambda_{2}\pm t,\lambda_{3},\ldots,\lambda_{n}\}\in \mathbb{S}_{n}$?, where $\mathbb{S}_{n}$ is the collection of all the lists of $n$ real numbers which are realized by a nonnegative symmetric matrix.}
\end{question}
\begin{question}\label{Q2}
\rm{Do complex eigenvalues of nonnegative matrices have a property similar to Theorem \ref{Guo}?}
\end{question}
As far as we know Question \ref{Q1} remains open, while Question \ref{Q2} was partially solved by Laffey \cite{Laffey}. He obtained the following result:
\begin{theorem}{\rm \cite[Theorem 1.1]{Laffey}}\label{Laffey}
If $\Lambda=\{\lambda_{1},a+ib,a-ib,\lambda_{4},\ldots,\lambda_{n}\}\in \mathbb{N}_{n}$, where $a$ is a real number, $b>0$ and $i=\sqrt{-1}$, then for all $t\geq0$ the list
\begin{align*}
\Lambda^{-}_{t}&=\{\lambda_{1}+2t,a-t+ib,a-t-ib,\lambda_{4},\ldots,\lambda_{n}\}\in \mathbb{N}_{n}.
\end{align*}
\end{theorem}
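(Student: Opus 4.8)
The plan is to realize the shifted list by an explicit nonnegative perturbation supported on the real invariant subspace attached to the complex pair together with the Perron direction, in the spirit of Theorem \ref{Brauer} extended to rank three (the Brauer--Rado theorem).

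First I would reduce to the case of constant row sums. If $\Lambda=\sigma(A)$ for a nonnegative $A$, the diagonal similarity $D^{-1}AD$ with $D=\operatorname{diag}(\mathbf{x})$ and $\mathbf{x}>0$ a Perron eigenvector produces a nonnegative matrix in $\mathcal{CS}_{\lambda_{1}}$ with the same spectrum, so I may assume $A\in\mathcal{CS}_{\lambda_{1}}$, i.e. $A\mathbf{e}=\lambda_{1}\mathbf{e}$; the reducible case, where the Perron vector has zeros, is deferred to the end and handled by a limiting argument through irreducible approximants. Let $\mathbf{x}^{\textsuperscript{T}}\geq 0$ denote a left Perron eigenvector normalized by $\mathbf{x}^{\textsuperscript{T}}\mathbf{e}=1$. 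Next I set up the invariant subspace: if $\mathbf{z}=\mathbf{u}+i\mathbf{w}$ is a right eigenvector for $a+ib$, then $A\mathbf{u}=a\mathbf{u}-b\mathbf{w}$ and $A\mathbf{w}=b\mathbf{u}+a\mathbf{w}$, so $W=\operatorname{span}\{\mathbf{e},\mathbf{u},\mathbf{w}\}$ is $A$-invariant, and writing $\widetilde{X}=\begin{bmatrix}\mathbf{e}\ \mathbf{u}\ \mathbf{w}\end{bmatrix}$ we have $A\widetilde{X}=\widetilde{X}\widetilde{\Omega}$ with $\widetilde{\Omega}=\begin{bmatrix}\lambda_{1}&0&0\\ 0&a&b\\ 0&-b&a\end{bmatrix}$, whose eigenvalues are exactly $\lambda_{1}$ and $a\pm ib$ (here $b>0$ guarantees $\mathbf{u},\mathbf{w}$ are independent).

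The spectral step is then immediate from the rank-$3$ extension of Theorem \ref{Brauer}: for any real $3\times n$ matrix $R$, the matrix $A+\widetilde{X}R$ has spectrum $\sigma(\widetilde{\Omega}+R\widetilde{X})\cup\{\lambda_{4},\ldots,\lambda_{n}\}$. I would choose $R$ with $R\widetilde{X}=D:=\operatorname{diag}(2t,-t,-t)$, so that $\widetilde{\Omega}+D=\begin{bmatrix}\lambda_{1}+2t&0&0\\ 0&a-t&b\\ 0&-b&a-t\end{bmatrix}$ has eigenvalues $\lambda_{1}+2t$ and $(a-t)\pm ib$. Thus $A':=A+\widetilde{X}R$ realizes $\Lambda^{-}_{t}$ as soon as $A'\geq 0$, and the whole difficulty is concentrated in selecting $R$ within the affine family $\{R:R\widetilde{X}=D\}$ so that nonnegativity is preserved.

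The crux is this nonnegativity bookkeeping. Writing $R=DY^{\textsuperscript{T}}+N$, where $Y=\begin{bmatrix}\mathbf{x}\ \mathbf{f}\ \mathbf{g}\end{bmatrix}$ is the biorthogonal dual basis ($Y^{\textsuperscript{T}}\widetilde{X}=I_{3}$, with $\mathbf{x}^{\textsuperscript{T}}\mathbf{u}=\mathbf{x}^{\textsuperscript{T}}\mathbf{w}=0$ and $\mathbf{f},\mathbf{g}$ built from the left eigenvectors of the complex pair) and $N\widetilde{X}=0$, gives $A'=A+2t\,\mathbf{e}\mathbf{x}^{\textsuperscript{T}}-t\bigl(\mathbf{u}\mathbf{f}^{\textsuperscript{T}}+\mathbf{w}\mathbf{g}^{\textsuperscript{T}}\bigr)+\widetilde{X}N$. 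The term $2t\,\mathbf{e}\mathbf{x}^{\textsuperscript{T}}$ is entrywise nonnegative because $\mathbf{e}>0$ and $\mathbf{x}\geq 0$, and it is precisely this doubled Perron weight, twice the amount stripped from each complex eigenvalue, that must absorb the indefinite rank-two correction $-t(\mathbf{u}\mathbf{f}^{\textsuperscript{T}}+\mathbf{w}\mathbf{g}^{\textsuperscript{T}})$. Letting $t\to\infty$ forces the $t$-independent requirement $2\,\mathbf{e}\mathbf{x}^{\textsuperscript{T}}+\widetilde{X}N\geq \mathbf{u}\mathbf{f}^{\textsuperscript{T}}+\mathbf{w}\mathbf{g}^{\textsuperscript{T}}$ entrywise, and I would meet it by exploiting the $3(n-3)$ degrees of freedom in $N$ (those $R$ with $N\widetilde{X}=0$) together with the strict positivity of $\mathbf{e}$ to redistribute the negative entries of the correction into positions where the Perron term and the fixed nonnegative $A$ leave enough slack. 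I expect this redistribution, rather than the spectral identity, to be the hard part, and I expect the asymmetry between the single $+2t$ shift and the two $-t$ shifts, with no analogous freedom to raise the Perron value by only $t$, to emerge as the structural reason the theorem takes exactly this form.
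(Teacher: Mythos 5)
Your proposal correctly sets up the spectral half of the argument (the rank-three Brauer--Rado perturbation on the invariant subspace $\operatorname{span}\{\mathbf{e},\mathbf{u},\mathbf{w}\}$ does produce the eigenvalues $\lambda_{1}+2t$, $a-t\pm ib$ while fixing $\lambda_{4},\ldots,\lambda_{n}$), but the proof stops exactly where the theorem begins. You reduce everything to the feasibility of the entrywise inequality $2\,\mathbf{e}\mathbf{x}^{\textsuperscript{T}}+\widetilde{X}N\geq \mathbf{u}\mathbf{f}^{\textsuperscript{T}}+\mathbf{w}\mathbf{g}^{\textsuperscript{T}}$ and then write that you ``would meet it by exploiting the $3(n-3)$ degrees of freedom in $N$'' and that you ``expect this redistribution to be the hard part.'' That expectation is right, and it is precisely the part that is missing: no choice of $N$ is exhibited, and no argument is given that one exists. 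Your parametrization also builds the Perron-raising term as $2t\,\mathbf{e}\mathbf{x}^{\textsuperscript{T}}$ with $\mathbf{x}$ a \emph{left} Perron eigenvector; when $A$ is reducible, $\mathbf{x}$ may vanish on some columns, and there the inequality must be absorbed entirely by $\widetilde{X}N$ with rows annihilating $\mathbf{e},\mathbf{u},\mathbf{w}$ --- it is not at all clear this is possible, and the deferral to ``irreducible approximants'' is problematic because such approximants change the spectrum.

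The statement is quoted here from Laffey and from Guo--Guo, so this paper does not reprove it, but the constructive argument of \cite{Guo1} is reproduced almost verbatim in the proof of Theorem \ref{PertComp}, Case 1 $ii)$, and it contains the two devices your proposal lacks. First, among all pairs of coordinates one picks $(1,2)$ maximizing the $2\times 2$ minor $\Delta=u_{1}v_{2}-u_{2}v_{1}$ of $\begin{bmatrix}\mathbf{u} & \mathbf{v}\end{bmatrix}$; normalizing by $\Delta$ as in \eqref{Imp1} forces the rank-two correction $P(\mathbf{e}_{1}X^{\textsuperscript{T}}+\mathbf{e}_{2}Y^{\textsuperscript{T}})P^{-1}$ that shifts $a\pm ib$ to $a-t\pm ib$ to be supported on columns $1,2$ only, with every entry bounded by $t$ in absolute value. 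Second, Brauer's theorem is applied with the \emph{right} Perron eigenvector $\mathbf{e}>0$ against the row vector $t(\mathbf{e}_{1}+\mathbf{e}_{2})^{\textsuperscript{T}}$ --- Theorem \ref{Brauer} does not require $\mathbf{q}$ to be a left eigenvector --- which adds exactly $t$ to every entry of those same two columns while raising $\lambda_{1}$ by $t\,\mathbf{e}^{\textsuperscript{T}}(\mathbf{e}_{1}+\mathbf{e}_{2})=2t$. Nonnegativity is then immediate, with no feasibility question left open and no dependence on a positive left eigenvector. Your closing structural remark (that the $+2t$ on $\lambda_{1}$ against $-t$ on each of the conjugate pair is forced) is a reasonable heuristic and consistent with the discussion of $\delta$ following Theorem \ref{Guo2}, but as written your argument does not establish the theorem.
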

Laffey \cite{Laffey} used a rank one perturbation to first prove Theorem \ref{Laffey} for sufficiently small $t\geq0$ and then a compactness argument is used to extend the result to all $t\geq0$. Later, Guo and Guo \cite[Theorem 1.2]{Guo1} applied a rank two perturbation and directly proved Theorem \ref{Laffey} for all $t\geq0$, establishing a constructive proof that allows easily find a nonnegative matrix to realize the perturbed list. Similarly, the authors also establish a constructive proof for the following result:
\begin{theorem}{\rm \cite[Proposition 3.1]{Guo1}}\label{Guo2}
If $\Lambda=\{\lambda_{1},a+ib,a-ib,\lambda_{4},\ldots,\lambda_{n}\}\in \mathbb{N}_{n}$, where $a$ is a real number, $b>0$ and $i=\sqrt{-1}$, then for all $t\geq 0$ the list
\begin{align*}
\Lambda^{+}_{t}&=\{\lambda_{1}+\delta t,a+t+ib,a+t-ib,\lambda_{4},\ldots,\lambda_{n}\}\in \mathbb{N}_{n}.
\end{align*}
\end{theorem}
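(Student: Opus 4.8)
The plan is to realize $\Lambda^{+}_{t}$, for all $t\geq 0$ simultaneously, by adding to a fixed realizing matrix a single nonnegative perturbation that depends linearly on $t$, and to track the spectrum by three successive applications of Brauer's theorem (Theorem \ref{Brauer}). First I would invoke the standard reduction that any list in $\mathbb{N}_{n}$ is realized by a nonnegative matrix with constant row sums equal to its Perron root; thus I may fix $A\geq 0$ with $A\mathbf{e}=\lambda_{1}\mathbf{e}$ and $\sigma(A)=\Lambda$, where $\mathbf{e}$ is the all-ones vector, which is then the Perron eigenvector. Let $\mathbf{v}=\mathbf{x}+i\mathbf{y}$ be an eigenvector of $A$ for $a+ib$, so that $A\overline{\mathbf{v}}=(a-ib)\overline{\mathbf{v}}$ and $\operatorname{span}\{\mathbf{e},\mathbf{x},\mathbf{y}\}$ is a real $A$-invariant subspace carrying exactly the eigenvalues $\lambda_{1}$ and $a\pm ib$.

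Next I would prescribe the perturbation in the scale-invariant form $B=A+tM$, with
\begin{equation*}
M=2\operatorname{Re}\!\left(\mathbf{v}\,\mathbf{q}^{\textsuperscript{T}}\right)+\mathbf{e}\,\mathbf{p}^{\textsuperscript{T}}=\mathbf{v}\,\mathbf{q}^{\textsuperscript{T}}+\overline{\mathbf{v}}\,\overline{\mathbf{q}}^{\textsuperscript{T}}+\mathbf{e}\,\mathbf{p}^{\textsuperscript{T}},
\end{equation*}
where $\mathbf{q}\in\mathbb{C}^{n}$ and $\mathbf{p}\in\mathbb{R}^{n}$ are fixed vectors, and $\delta>0$ a fixed constant, subject to the normalisation and orthogonality conditions
\begin{equation*}
\mathbf{q}^{\textsuperscript{T}}\mathbf{e}=0,\qquad \mathbf{q}^{\textsuperscript{T}}\overline{\mathbf{v}}=0,\qquad \mathbf{q}^{\textsuperscript{T}}\mathbf{v}=1,\qquad \mathbf{p}^{\textsuperscript{T}}\mathbf{x}=\mathbf{p}^{\textsuperscript{T}}\mathbf{y}=0,\qquad \mathbf{p}^{\textsuperscript{T}}\mathbf{e}=\delta.
\end{equation*}
These conditions are exactly what make the three rank-one pieces of $tM$, namely $t\,\mathbf{v}\mathbf{q}^{\textsuperscript{T}}$, $t\,\overline{\mathbf{v}}\,\overline{\mathbf{q}}^{\textsuperscript{T}}$ and $t\,\mathbf{e}\mathbf{p}^{\textsuperscript{T}}$, decouple.

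Now I would compute the spectrum by applying Theorem \ref{Brauer} three times in succession. Using the eigenvector $\mathbf{v}$ with the piece $t\,\mathbf{v}\mathbf{q}^{\textsuperscript{T}}$ shifts $a+ib$ to $a+ib+t\,\mathbf{v}^{\textsuperscript{T}}\mathbf{q}=a+t+ib$, leaving all other eigenvalues fixed; since $\mathbf{q}^{\textsuperscript{T}}\overline{\mathbf{v}}=0$, the vector $\overline{\mathbf{v}}$ is still an $(a-ib)$-eigenvector of $A+t\,\mathbf{v}\mathbf{q}^{\textsuperscript{T}}$, so a second application with $\overline{\mathbf{v}}$ shifts $a-ib$ to $a+t-ib$. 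Because $\mathbf{q}^{\textsuperscript{T}}\mathbf{e}=0$, the vector $\mathbf{e}$ remains a $\lambda_{1}$-eigenvector throughout, so a third application with $\mathbf{e}$ and the piece $t\,\mathbf{e}\mathbf{p}^{\textsuperscript{T}}$ shifts $\lambda_{1}$ to $\lambda_{1}+t\,\mathbf{e}^{\textsuperscript{T}}\mathbf{p}=\lambda_{1}+\delta t$, while $\lambda_{4},\ldots,\lambda_{n}$ are untouched. Hence $\sigma(B)=\Lambda^{+}_{t}$. Moreover $M\mathbf{e}=\delta\mathbf{e}$, so $B\in\mathcal{CS}_{\lambda_{1}+\delta t}$, and once $B\geq 0$ is known, Perron--Frobenius guarantees that $\lambda_{1}+\delta t$ is its spectral radius.

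It remains only to secure nonnegativity, and this is where the real work lies. Since $B=A+tM$ with $A\geq 0$ fixed, we have $B\geq 0$ for every $t\geq 0$ as soon as $M\geq 0$; in this way the entire one-parameter family is settled at once, with no compactness argument. The heart of the proof is therefore the feasibility statement: one can choose $\mathbf{q}$, $\mathbf{p}$ meeting the linear conditions above, together with a positive constant $\delta$ independent of $t$, so that $M=2\operatorname{Re}(\mathbf{v}\mathbf{q}^{\textsuperscript{T}})+\mathbf{e}\mathbf{p}^{\textsuperscript{T}}\geq 0$. The term $\mathbf{e}\mathbf{p}^{\textsuperscript{T}}$ adds the common row $\mathbf{p}^{\textsuperscript{T}}$ to every row of $M$ and so supplies the nonnegative slack that must absorb the possibly negative entries of the complex-shift term $2\operatorname{Re}(\mathbf{v}\mathbf{q}^{\textsuperscript{T}})$; raising the Perron root (by $\delta t$) is precisely what pays for pushing the conjugate pair rightward. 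The delicate point is that $\mathbf{p}$ is forced to be orthogonal to $\mathbf{x}$ and $\mathbf{y}$, so its mass cannot be placed in arbitrary coordinates, and the argument must balance the residual freedom in $\mathbf{q}$ against the sign pattern of $2\operatorname{Re}(\mathbf{v}\mathbf{q}^{\textsuperscript{T}})$; it is exactly this balance that determines the admissible value of $\delta$. I expect this nonnegativity step to be the main obstacle, the eigenvalue bookkeeping through Theorem \ref{Brauer} and the reduction to constant row sums being routine by comparison.
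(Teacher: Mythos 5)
Your spectral bookkeeping is correct, and it is worth noting that the paper itself does not prove this statement --- it is quoted from \cite{Guo1}, and the underlying rank-two perturbation technique is what the authors redeploy inside the proof of Theorem \ref{PertComp}. Measured against that argument, yours is a genuinely different route (three decoupled rank-one Brauer perturbations): since $\mathbf{e},\mathbf{v},\overline{\mathbf{v}}$ are eigenvectors for the distinct eigenvalues $\lambda_{1},a+ib,a-ib$ they are linearly independent, so a $\mathbf{q}$ with $\mathbf{q}^{T}\mathbf{e}=\mathbf{q}^{T}\overline{\mathbf{v}}=0$, $\mathbf{q}^{T}\mathbf{v}=1$ exists, and the three applications of Theorem \ref{Brauer} do give $\sigma(A+tM)=\Lambda^{+}_{t}$. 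The gap is that you never establish the one claim that carries all the content: that $\mathbf{q}$, $\mathbf{p}$ and a constant $\delta$ can be chosen so that $M=2\Re(\mathbf{v}\mathbf{q}^{T})+\mathbf{e}\mathbf{p}^{T}\geq 0$. You flag this as ``the main obstacle,'' but it is not an obstacle to be dealt with later --- it \emph{is} the theorem. Moreover your side conditions make it harder than it needs to be. The constraints $\mathbf{p}^{T}\mathbf{x}=\mathbf{p}^{T}\mathbf{y}=0$ are not required by Brauer's theorem (once $\mathbf{e}$ is an eigenvector of the twice-perturbed matrix, any $\mathbf{p}$ is admissible), yet they confine $\mathbf{p}$ to a codimension-two subspace that need not meet the orthant $\{\mathbf{p}: p_{j}\geq\max_{i}(-2\Re(v_{i}q_{j}))\ \text{for all } j\}$ on which $M\geq0$; and $\mathbf{q}^{T}\mathbf{e}=0$ forces $\mathbf{q}$ to have at least three nonzero entries, spreading the negative part of $2\Re(\mathbf{v}\mathbf{q}^{T})$ over at least three columns. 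Even if you drop the superfluous constraints and simply take $p_{j}=\max_{i}\max\left(0,-2\Re(v_{i}q_{j})\right)$, you only obtain a $\delta=\mathbf{p}^{T}\mathbf{e}$ depending on $A$ and on the chosen eigenvector, whereas the theorem as proved in \cite{Guo1} asserts a universal constant $\delta=4$, and the whole point of the surrounding discussion is whether that constant can be lowered to $2$ or $1$.

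The missing idea is the normalization that makes the negative entries uniformly small and confined to exactly two columns. Writing the eigenvector as $\mathbf{u}+i\mathbf{v}$, one sets $det(i,j)=u_{i}v_{j}-u_{j}v_{i}$, reindexes so that $\Delta=det(1,2)=\max_{i,j}det(i,j)>0$, and builds the rank-two perturbation from $x_{1}=tv_{2}/\Delta$, $x_{2}=-tv_{1}/\Delta$, $y_{1}=-tu_{2}/\Delta$, $y_{2}=tu_{1}/\Delta$, exactly as in the proof of Theorem \ref{PertComp}. The resulting perturbing matrix is supported in columns $1$ and $2$ only and, by the maximality of $\Delta$, every entry is bounded by $t$ in modulus (this is \eqref{Imp1}); adding a nonnegative matrix of the form $c\,t\,\mathbf{e}(\mathbf{e}_{1}^{T}+\mathbf{e}_{2}^{T})$ with $c$ a small universal constant then restores nonnegativity while raising the Perron root by $2ct$, which is what pins $\delta$ down to $4$ in general (and to $2$ in the favourable cases of Theorem \ref{PertComp}, where the compensating term is absorbed by the other diagonal block). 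Without this determinant normalization, or an equivalent quantitative bound on $2\Re(v_{i}q_{j})$, your nonnegativity step has nothing to push against and the proof does not close.
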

The authors in \cite{Guo1} showed Theorem \ref{Guo2} with $\delta=4$ and they asked whether the constant $\delta$ can be improved to be $1$ or $2$. In this paper, we show that for certain cases, $\delta$ can be improved to be $2$ (Theorem \ref{PertComp}, below).
\begin{remark}\label{R1}
{\rm Let $X_{+t}$ and $X_{-t}$ be the nonnegative matrices with perturbed spectrum $\Lambda^{+}_{t}$ and $\Lambda^{-}_{t}$ respectively, constructed by the authors in \cite{Guo, Guo1} for Theorems \ref{Guo}, \ref{Laffey} and \ref{Guo2}. Then, $X_{+t}\geq Y$ and $X_{-t}\geq Y$ for all $t\geq0$, where $Y$ is the initial nonnegative matrix with unperturbed spectrum $\Lambda$.}
\end{remark}
The purpose of the paper is to answer Questions \ref{Q1} and \ref{Q2} for nonnegative centrosymmetric matrices. In particular, Theorem \ref{Guo1} (below) partially answer Question \ref{Q1} when the realizing matrix is bisymmetric (symmetric - centrosymmetric). Our paper is organized as follows{\color{blue}:} In Section \ref{s2}, we establish some well known properties for general centrosymmetric matrices. Section \ref{s3}, is devoted to obtain some spectral properties for nonnegative centrosymmetric matrices similar to some known properties for general nonnegative matrices. Finally, in Section \ref{s4}, we present our main theorems, which are spectral perturbation results that show how to change two or three eigenvalues of an $n\times n$ nonnegative centrosymmetric matrix without changing any of the remaining eigenvalues neither nonnegativity nor the centrosymmetric structure.
\section{Some properties of centrosymmetric matrices}\label{s2}
In this section we establish some well known properties of centrosymmetric matrices, which can be
found in several publications (see \cite{Alan,Cantoni}, for example).
\begin{definition}
\label{def} An $n-$dimensional vector $\mathbf{x}$ is said to be \textit{symmetric} if $J \mathbf{x}=\mathbf{x}$ and it is said to be \textit{skew-symmetric} if $J \mathbf{x}=-\mathbf{x}$.\end{definition}
\begin{lemma}{\rm\cite{Cantoni}}\label{cent}
Let $C_{1}$ and $C_{2}$ be centrosymmetric matrices and $\alpha_{1}, \alpha_{2}\in \mathbb{R}$. Then,
\ $i) \ C_{1}^{-1}$ if exists, \ $ii) \ C_{1}^{\textsuperscript{T}}$, \ $iii) \ \alpha _{1}C_{1}\pm \alpha _{2}C_{2}$, \ $iv) \ C_{1}C_{2}$, are all centrosymmetric matrices.
\end{lemma}
\begin{theorem}
{\rm\cite{Cantoni}}\label{Cen 1} Let $C$ be an $n\times n$ centrosymmetric
matrix. \\
$i)$ If $n=2m$, then $C$ can be written as $C=
\begin{bmatrix}
A & JBJ \\
B & JAJ
\end{bmatrix}
$, where $A, B$ and $J$ are $m\times m$ matrices. Moreover, $C$ is orthogonally similar to the matrix
$\begin{bmatrix}
A+JB &  \\
& A-JB
\end{bmatrix}$
and the eigenvectors corresponding to the eigenvalues of $A+JB$ can be chosen
to be symmetric, while the eigenvectors corresponding to the eigenvalues of $A-JB$ can be chosen to be skew-symmetric. Also, if $C$ is nonnegative with Perron eigenvalue $\lambda_{1}$, then $\lambda_{1}$ is the Perron eigenvalue of $A+JB$.
\newline \\
$ii)$ If $n=2m+1$, then $C$ can be written as
$C=
\begin{bmatrix}
A & \mathbf{x} & JBJ \\
\mathbf{y}^{\textsuperscript{T}} & c & \mathbf{y}^{\textsuperscript{T}}J \\
B & J\mathbf{x} & JAJ
\end{bmatrix}
$, where $A, B$ and $J$ are $m\times m$ matrices, $\mathbf{x}$ and $\mathbf{y}$
are $m-$dimensional vectors and $c$ is a real number. Moreover, $C$ is orthogonally similar to the matrix \\
$\begin{bmatrix}
c & \sqrt{2}\mathbf{y}^{\textsuperscript{T}} &  \\
\sqrt{2}\mathbf{x} & A+JB &  \\
&  & A-JB
\end{bmatrix}$ and the eigenvectors corresponding to the eigenvalues of
$\begin{bmatrix}
c & \sqrt{2}\mathbf{y}^{\textsuperscript{T}} \\
\sqrt{2}\mathbf{x} & A+JB
\end{bmatrix}$
can be chosen to be symmetric, while the eigenvectors corresponding to the eigenvalues of $A-JB$ can be chosen to
be skew-symmetric. Also, if $C$ is nonnegative with Perron eigenvalue $\lambda_{1}$, then $
\lambda_{1} $ is the Perron eigenvalue of
$\begin{bmatrix}
c & \sqrt{2}\mathbf{y}^{\textsuperscript{T}} \\
\sqrt{2}\mathbf{x} & A+JB
\end{bmatrix}$.
\end{theorem}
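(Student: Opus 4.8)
The plan is to verify both structural claims by direct block computation, then establish the orthogonal diagonalization through an explicit orthogonal matrix that encodes the symmetric/skew-symmetric splitting, reserving the Perron-eigenvalue statement for a Perron--Frobenius argument at the end.

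First I would handle $n=2m$. Partition $J_{n}$ conformally as $J_{n}=\begin{bmatrix} 0 & J \\ J & 0\end{bmatrix}$ with $J$ the $m\times m$ counteridentity, write $C=\begin{bmatrix} C_{11} & C_{12} \\ C_{21} & C_{22}\end{bmatrix}$ in $m\times m$ blocks, and expand the defining relation $J_{n}CJ_{n}=C$. Using $J^{2}=I$ this forces $C_{22}=JC_{11}J$ and $C_{12}=JC_{21}J$; setting $A:=C_{11}$ and $B:=C_{21}$ gives exactly the stated form. For $n=2m+1$ I would repeat this with the middle row and column isolated, $J_{n}=\begin{bmatrix} 0&0&J \\ 0&1&0 \\ J&0&0\end{bmatrix}$, and read off the relations among the blocks: the scalar $c$ is fixed, the off-center vector blocks must satisfy $c_{32}=J\mathbf{x}$ and $c_{23}^{\textsuperscript{T}}=\mathbf{y}^{\textsuperscript{T}}J$, and the corner blocks obey the same corner relations as in the even case.

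Next, for the diagonalization I would introduce the orthogonal matrix $Q=\tfrac{1}{\sqrt{2}}\begin{bmatrix} I & I \\ J & -J\end{bmatrix}$ in the even case, and its odd-case analogue $Q=\tfrac{1}{\sqrt{2}}\begin{bmatrix} I & 0 & I \\ 0 & \sqrt{2} & 0 \\ J & 0 & -J\end{bmatrix}$; orthogonality $Q^{\textsuperscript{T}}Q=I$ follows immediately from $J^{2}=I$ and $J^{\textsuperscript{T}}=J$. A direct computation of $Q^{\textsuperscript{T}}CQ$, applying $J^{2}=I$ repeatedly, collapses the off-diagonal blocks to zero and produces $A+JB$ and $A-JB$ (respectively the bordered block $\begin{bmatrix} c & \sqrt{2}\mathbf{y}^{\textsuperscript{T}} \\ \sqrt{2}\mathbf{x} & A+JB\end{bmatrix}$ and $A-JB$ in the odd case). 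The symmetry type of the eigenvectors then falls out of which columns of $Q$ are used: the columns of the form $\tfrac{1}{\sqrt{2}}\begin{bmatrix} u \\ Ju\end{bmatrix}$ are fixed by $J_{n}$, hence symmetric, while $\tfrac{1}{\sqrt{2}}\begin{bmatrix} u \\ -Ju\end{bmatrix}$ are negated, hence skew-symmetric. Consequently an eigenvector of $A+JB$ lifts through $Q$ to a symmetric eigenvector of $C$ for the same eigenvalue, and an eigenvector of $A-JB$ lifts to a skew-symmetric one.

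The step I expect to be the crux is the Perron claim. Here I would first note that $J_{n}CJ_{n}=C$ together with $J_{n}^{2}=I$ gives $J_{n}C=CJ_{n}$, so the $\lambda_{1}$-eigenspace of $C$ is $J_{n}$-invariant. Taking the nonnegative Perron eigenvector $\mathbf{v}$ supplied by Perron--Frobenius, its symmetric part $\tfrac{1}{2}(\mathbf{v}+J_{n}\mathbf{v})$ is again a $\lambda_{1}$-eigenvector, is nonnegative since $J_{n}\mathbf{v}\ge 0$, and is nonzero because a nonzero nonnegative vector cannot satisfy $J_{n}\mathbf{v}=-\mathbf{v}$. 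Pulling this symmetric eigenvector back through $Q$ exhibits $\lambda_{1}$ as an eigenvalue of $A+JB$ (respectively of the bordered block) with a nonnegative eigenvector. Since these blocks are themselves nonnegative, being assembled from the nonnegative entries of $C$ and the permutation $J$, and since their spectra are subsets of $\sigma(C)$, we get $\rho(A+JB)\le\lambda_{1}$; combined with $\lambda_{1}\in\sigma(A+JB)$ this yields $\rho(A+JB)=\lambda_{1}$, so $\lambda_{1}$ is the Perron eigenvalue of the symmetric block. The only delicate point is the nonnegativity and nonvanishing of the symmetric part of the Perron vector, which is precisely why I single out this final step as the main obstacle; the block identities and the computation of $Q^{\textsuperscript{T}}CQ$ are otherwise routine.
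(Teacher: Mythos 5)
Your proof is correct; the paper itself gives no proof of this theorem, quoting it directly from Cantoni and Butler \cite{Cantoni}, and your argument (block expansion of $J_nCJ_n=C$, conjugation by the explicit orthogonal matrix $Q=\tfrac{1}{\sqrt{2}}\bigl[\begin{smallmatrix} I & I \\ J & -J\end{smallmatrix}\bigr]$ and its odd analogue, then symmetrizing the Perron vector via $J_nC=CJ_n$) is essentially the standard proof from that reference. The one step worth double-checking, the nonvanishing of the symmetric part $\tfrac{1}{2}(\mathbf{v}+J_n\mathbf{v})$, you handle correctly, since $\mathbf{v}+J_n\mathbf{v}\geq\mathbf{v}\geq 0$ cannot vanish for a nonzero nonnegative $\mathbf{v}$.
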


\section{Some spectral properties for nonnegative centrosymmetric matrices}\label{s3}
This section is devoted to obtain some spectral properties for nonnegative centrosymmetric matrices similar to some known properties for general nonnegative matrices, which will play an important role. Firstly, we establish a centrosymmetric version of a result given by Guo \cite{Guo}:
\begin{theorem}\label{SFC}
Let $C=\left[c_{ij}\right]$ be an $n\times n$ nonnegative centrosymmetric matrix with $\rho(C)=\lambda_{1}$. Then there is a nonnegative centrosymmetric matrix $\mathcal{B}$ with $\sigma(\mathcal{B})=\sigma(C)$ and satisfying $\mathcal{B}\mathbf{e}=\lambda_{1}\mathbf{e}$.
\end{theorem}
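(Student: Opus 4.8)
The plan is to adapt Guo's diagonal-scaling argument while carefully tracking the centrosymmetric structure, and the crucial observation that unlocks everything is that the counteridentity commutes with $C$: since $JCJ=C$ and $J^{\textsuperscript{2}}=I$, we have $CJ=JC$. I would first dispose of the case where $C$ is positive (hence irreducible, so the Perron eigenvalue $\lambda_{1}$ is simple with a strictly positive eigenvector). Let $\mathbf{x}>0$ satisfy $C\mathbf{x}=\lambda_{1}\mathbf{x}$. Applying $C$ to $J\mathbf{x}$ and using $CJ=JC$ gives $C(J\mathbf{x})=\lambda_{1}(J\mathbf{x})$, so $J\mathbf{x}$ is again a Perron eigenvector; as the Perron eigenspace is one-dimensional and both $\mathbf{x}$ and $J\mathbf{x}$ are positive, one gets $J\mathbf{x}=\mathbf{x}$, i.e. the Perron vector is \emph{symmetric} in the sense of Definition \ref{def}. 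The relation $x_{i}=x_{n-i+1}$ is exactly what forces the diagonal matrix $D=\mathrm{diag}(x_{1},\ldots,x_{n})$ to satisfy $JDJ=D$, and therefore $JD^{-1}J=D^{-1}$.

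With such a $D$ I would set $\mathcal{B}=D^{-1}CD$. This matrix is nonnegative (a positive-diagonal similarity of a nonnegative matrix), is similar to $C$ and hence shares its spectrum, and satisfies $\mathcal{B}\mathbf{e}=D^{-1}C\mathbf{x}=\lambda_{1}D^{-1}\mathbf{x}=\lambda_{1}\mathbf{e}$, so it has constant row sums $\lambda_{1}$. The decisive point is that centrosymmetry survives the conjugation: $J\mathcal{B}J=(JD^{-1}J)(JCJ)(JDJ)=D^{-1}CD=\mathcal{B}$, using $JDJ=D$, $JD^{-1}J=D^{-1}$ and $JCJ=C$. This settles the positive case completely with a short computation.

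The remaining and genuinely harder case is the reducible one, where the Perron vector may have zero entries and $D$ fails to be invertible; here I would argue by a limiting procedure. For each $k$ put $C_{k}=C+\tfrac{1}{k}\,\mathbf{e}\mathbf{e}^{\textsuperscript{T}}$, which is strictly positive and still centrosymmetric, since the all-ones matrix is centrosymmetric and Lemma \ref{cent} preserves the structure under sums. The previous two paragraphs then produce nonnegative centrosymmetric matrices $\mathcal{B}_{k}$ with $\mathcal{B}_{k}\mathbf{e}=\rho(C_{k})\mathbf{e}$ and $\sigma(\mathcal{B}_{k})=\sigma(C_{k})$.

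Finally I would invoke compactness and continuity. Because $\rho(C_{k})\to\rho(C)=\lambda_{1}$, the row sums $\rho(C_{k})$ are uniformly bounded, and a nonnegative matrix is bounded entrywise by its largest row sum, so $\{\mathcal{B}_{k}\}$ lies in a compact box and admits a subsequence $\mathcal{B}_{k_{j}}\to\mathcal{B}\geq 0$. Centrosymmetry ($J\mathcal{B}J=\mathcal{B}$) and the constant-row-sum relation ($\mathcal{B}\mathbf{e}=\lambda_{1}\mathbf{e}$) are closed conditions and thus pass to the limit. For the spectrum I would compare characteristic polynomials: their coefficients are polynomial, hence continuous, in the entries, so $\det(xI-\mathcal{B})=\lim_{j}\det(xI-\mathcal{B}_{k_{j}})=\lim_{j}\det(xI-C_{k_{j}})=\det(xI-C)$, which yields $\sigma(\mathcal{B})=\sigma(C)$ with multiplicities. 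I expect the real obstacle to be precisely this reducible case, namely showing that the scaling construction can be recovered in the limit while simultaneously preserving nonnegativity, centrosymmetry, the prescribed row sums, and the entire spectrum; the positive case is essentially immediate once the symmetry of the Perron vector is noticed.
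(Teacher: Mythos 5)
Your proposal is correct and takes essentially the same route as the paper: perturb to a strictly positive centrosymmetric matrix, conjugate by the diagonal of the (symmetric) Perron eigenvector to get constant row sums while preserving nonnegativity and centrosymmetry, and recover the general case by a compactness/limit argument. The only cosmetic difference is that you derive the symmetry $J\mathbf{x}=\mathbf{x}$ of the Perron vector directly from $CJ=JC$ and the simplicity of $\lambda_{1}$, whereas the paper obtains it by citing Theorem \ref{Cen 1}.
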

\begin{proof}
Let $\epsilon>0$ and let us write $C(\epsilon)=\left[c_{ij}+\epsilon\right]$ with $\rho(C(\epsilon))=\lambda_{1}(\epsilon)$. Then, $\lim_{\epsilon \rightarrow 0}C(\epsilon)=C$ and $\lim_{\epsilon \rightarrow 0}\lambda_{1}(\epsilon)=\lambda_{1}$. Note that $C(\epsilon)$ is a positive centrosymmetric matrix. By Perron's theorem \cite{H.J1}, there is an ${\bf x}(\epsilon)=\begin{bmatrix}x_{1}(\epsilon) \ldots x_{n}(\epsilon)\end{bmatrix}^{\textsuperscript{T}}>0$ such that $C(\epsilon){\bf x}(\epsilon)=\lambda_{1}(\epsilon){\bf x}(\epsilon)$. By Theorem \ref{Cen 1}, we can assume that ${\bf x}(\epsilon)$ is symmetric. Therefore, $D(\epsilon)=diag\{x_{1}(\epsilon), \ldots, x_{n}(\epsilon)\}$ is a nonnegative centrosymmetric diagonal matrix. Consequently, by Lemma \ref{cent}, $B(\epsilon)=D^{-1}(\epsilon)C(\epsilon)D(\epsilon)$ is a positive centrosymmetric matrix, which is similar to $C(\epsilon)$. Note also that, $B(\epsilon)\in \mathcal{CS}_{\lambda_{1}(\epsilon)}$. Hence, $\|B(\epsilon)\|_{\infty}=\lambda_{1}(\epsilon) \to \lambda_{1}$, when $\epsilon \to 0$. Thus, given $\eta = 1$ there exists a $\theta>0$ such that for all $\epsilon\in (0,\theta)$, $\| B(\epsilon) \|_{\infty} < 1 + \lambda_{1}$. So $\{ B(\epsilon) : \epsilon\in (0,\theta)\}$ \ is contained in the compact set $\left\{B\in M_{n} : \| B \|_{\infty} \leq 1+\lambda_{1} \right\}$. Therefore, we can choose a convergent subsequence $\{\epsilon_{k} \}_{k\in
\mathbb{N}} \subset (0,\theta)$ with $\lim\limits_{k\to \infty}
\epsilon_{k} = 0$ such that $\{ B(\epsilon_{k})\}_{k\in \mathbb{N}} \subset \{
B(\epsilon) : \epsilon\in (0,\theta)\}$
and $\lim\limits_{k\to\infty}B(\epsilon_{k}) = \mathcal{B}$. Note that $\mathcal{B}\geq 0$, $\mathcal{B}\mathbf{e}=\lambda_{1}\mathbf{e}$ and $\sigma(\mathcal{B})=\sigma(C)$. Finally, since
\begin{equation*}
J\mathcal{B}J=J\left(\lim\limits_{k\to\infty}B(\epsilon_{k})\right)J=\lim\limits_{k\to\infty}JB(\epsilon_{k})J=
\lim\limits_{k\to\infty}B(\epsilon_{k})=\mathcal{B},
\end{equation*}
it follows that $\mathcal{B}$ is a centrosymmetric matrix.
\end{proof}\ \\

Some important consequences of Theorem \ref{SFC} are:
\begin{corollary}\label{CP}
$i)$ If $\Lambda=\left\{\lambda_{1},\lambda _{2},\ldots,\lambda _{n}\right\}\in \mathcal{CR}_{n}$ and $\epsilon>0$, then
\begin{equation*}
\left\{\lambda_{1}+\epsilon,\lambda _{2},\ldots,\lambda _{n}\right\}\in \widehat{\mathcal{CR}}_{n}.
\end{equation*}
$ii)$ If $\left\{\lambda_{1}+\epsilon,\lambda _{2},\ldots,\lambda _{n}\right\}\in \mathcal{CR}_{n}$ for every $\epsilon>0$, then \begin{equation*}
\left\{\lambda_{1},\lambda _{2},\ldots,\lambda _{n}\right\}\in \mathcal{CR}_{n}.
\end{equation*}
$iii)$ If $\Lambda=\left\{\lambda_{1},\lambda _{2},\ldots,\lambda _{n}\right\}\in \widehat{\mathcal{CR}}_{n}$, then there exists $\epsilon>0$ such that
\begin{equation*}
\left\{\lambda_{1}-\epsilon,\lambda _{2},\ldots,\lambda _{n}\right\}\in \widehat{\mathcal{CR}}_{n}.
\end{equation*}
\end{corollary}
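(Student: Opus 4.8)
The plan is to handle all three parts by combining the constant-row-sum normalization of Theorem \ref{SFC} with Brauer's rank-one perturbation (Theorem \ref{Brauer}), using the all-ones vector $\mathbf{e}$ as the Perron eigenvector. The single observation I would isolate first is that the rank-one matrix $\mathbf{e}\mathbf{q}^{\textsuperscript{T}}$ is centrosymmetric whenever $\mathbf{q}$ is symmetric, since $J(\mathbf{e}\mathbf{q}^{\textsuperscript{T}})J=(J\mathbf{e})(J\mathbf{q})^{\textsuperscript{T}}=\mathbf{e}\mathbf{q}^{\textsuperscript{T}}$ using $J\mathbf{e}=\mathbf{e}$; in particular $\mathbf{e}\mathbf{e}^{\textsuperscript{T}}$ is centrosymmetric with all entries equal to $1$. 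For $i)$, since $\Lambda\in\mathcal{CR}_{n}$ there is a nonnegative centrosymmetric realizing matrix with Perron eigenvalue $\lambda_{1}$, and Theorem \ref{SFC} replaces it by a nonnegative centrosymmetric $\mathcal{B}$ with the same spectrum and $\mathcal{B}\mathbf{e}=\lambda_{1}\mathbf{e}$. Applying Theorem \ref{Brauer} with eigenvector $\mathbf{v}=\mathbf{e}$ and $\mathbf{q}=\frac{\epsilon}{n}\mathbf{e}$ (symmetric, with $\mathbf{e}^{\textsuperscript{T}}\mathbf{q}=\epsilon$), the matrix $\mathcal{B}+\frac{\epsilon}{n}\mathbf{e}\mathbf{e}^{\textsuperscript{T}}$ is centrosymmetric, has spectrum $\{\lambda_{1}+\epsilon,\lambda_{2},\dots,\lambda_{n}\}$, and is strictly positive because every entry is at least $\frac{\epsilon}{n}>0$; hence the perturbed list lies in $\widehat{\mathcal{CR}}_{n}$.

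For $ii)$, I would reuse verbatim the compactness argument from the proof of Theorem \ref{SFC}. For each $\epsilon>0$ the list $\{\lambda_{1}+\epsilon,\lambda_{2},\dots,\lambda_{n}\}$ has Perron root $\lambda_{1}+\epsilon$ (it exceeds every $|\lambda_{j}|$), so Theorem \ref{SFC} furnishes a nonnegative centrosymmetric $\mathcal{B}_{\epsilon}$ with this spectrum and constant row sums $\lambda_{1}+\epsilon$, whence $\|\mathcal{B}_{\epsilon}\|_{\infty}=\lambda_{1}+\epsilon\leq\lambda_{1}+1$ for $\epsilon\in(0,1]$. The family $\{\mathcal{B}_{\epsilon}\}$ then lies in the compact set $\{B\in M_{n}:\|B\|_{\infty}\leq\lambda_{1}+1\}$, so I can extract a sequence $\epsilon_{k}\to0$ with $\mathcal{B}_{\epsilon_{k}}\to\mathcal{B}$. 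The limit $\mathcal{B}$ is nonnegative, is centrosymmetric by the same $J(\cdot)J$ continuity computation displayed in Theorem \ref{SFC}, and by continuity of the spectrum satisfies $\sigma(\mathcal{B})=\{\lambda_{1},\lambda_{2},\dots,\lambda_{n}\}$; thus $\{\lambda_{1},\dots,\lambda_{n}\}\in\mathcal{CR}_{n}$.

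For $iii)$, the subtraction forces me to start from a strictly positive matrix carrying enough slack, and here the matrix produced by the limiting construction of Theorem \ref{SFC} is a priori only nonnegative. I would therefore bypass the limit and normalize directly: since $\Lambda\in\widehat{\mathcal{CR}}_{n}$, a positive centrosymmetric $C$ realizes $\Lambda$, its Perron eigenvector $\mathbf{x}>0$ can be chosen symmetric by Theorem \ref{Cen 1}, and the diagonal similarity $\widetilde{B}=D^{-1}CD$ with $D=\mathrm{diag}(x_{1},\dots,x_{n})$ is again strictly positive, is centrosymmetric by Lemma \ref{cent}, and satisfies $\widetilde{B}\mathbf{e}=\lambda_{1}\mathbf{e}$. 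Putting $\mu=\min_{i,j}\widetilde{b}_{ij}>0$ and choosing any $\epsilon$ with $0<\epsilon<n\mu$, Theorem \ref{Brauer} with $\mathbf{v}=\mathbf{e}$ and $\mathbf{q}=-\frac{\epsilon}{n}\mathbf{e}$ shows that $\widetilde{B}-\frac{\epsilon}{n}\mathbf{e}\mathbf{e}^{\textsuperscript{T}}$ is centrosymmetric with spectrum $\{\lambda_{1}-\epsilon,\lambda_{2},\dots,\lambda_{n}\}$, and it remains strictly positive because $\frac{\epsilon}{n}<\mu$.

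The genuine obstacle common to all three parts is the positivity bookkeeping rather than the eigenvalue shift itself: one must choose $\mathbf{q}$ symmetric so that centrosymmetry survives the rank-one update, and in $iii)$ one must quantify how much mass can be subtracted before positivity is lost, which is exactly why I would normalize by the positive diagonal similarity instead of invoking the (possibly degenerate) limit of Theorem \ref{SFC}.
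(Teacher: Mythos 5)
Your proposal is correct and follows essentially the same route as the paper: normalize to constant row sums via Theorem \ref{SFC}, apply Brauer's theorem with the rank-one update $\pm\frac{\epsilon}{n}\mathbf{e}\mathbf{e}^{\textsuperscript{T}}$ for parts $i)$ and $iii)$, and reuse the compactness argument of Theorem \ref{SFC} for part $ii)$. Your only deviation is in part $iii)$, where you replace the appeal to Theorem \ref{SFC} by a direct positive diagonal similarity so that strict positivity of the normalized matrix is guaranteed before subtracting --- a legitimate and slightly more careful treatment of a point the paper's proof leaves implicit when it chooses $0<\epsilon<n\min\{c_{ij}\}$.
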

\begin{proof}
$i)$ Let $C$ be a nonnegative centrosymmetric matrix with spectrum $\Lambda$. By Theorem \ref{SFC}, we can assume that $C\in \mathcal{CS}_{\lambda_{1}}$. Therefore, from Lemma \ref{cent} and Theorem \ref{Brauer}, $C+\frac{\epsilon}{n}\mathbf{e}\mathbf{e}^{\textsuperscript{T}}$ is a positive centrosymmetric matrix with spectrum $\left\{\lambda_{1}+\epsilon,\lambda _{2},\ldots,\lambda _{n}\right\}$. \\ \\
$ii)$ For any $\epsilon>0$, let $C(\epsilon)$ be a nonnegative centrosymmetric matrix with spectrum $\left\{\lambda_{1}+\epsilon,\lambda_{2},\ldots,\lambda_{n}\right\}$. It follows that $\lambda_{1}+\epsilon\geq \max\limits_{i=2,\ldots, n}\{\vert \lambda_{i} \vert\}$ for any $\epsilon>0$ and thus $\lambda_{1}\geq \max\limits_{i=2,\ldots, n}\{\vert \lambda_{i} \vert\}$. By Theorem \ref{SFC}, we can assume that $C(\epsilon)\in \mathcal{CS}_{\lambda_{1}+\epsilon}$. Hence, $\|C(\epsilon)\|_{\infty}=\lambda_{1}+\epsilon \to \lambda_{1}$, when $\epsilon\to 0$. Similarly as in the proof of Theorem \ref{SFC}, we can choose a convergent subsequence
$\{C(\epsilon_{k})\}_{k\in \mathbb{N}}\subset \{C(\epsilon)\}_{\epsilon>0}$ with
$\lim\limits_{k\to \infty}\epsilon_{k} = 0$ such that
$C = \lim\limits_{k\to
\infty}C(\epsilon_{k})$
is a nonnegative centrosymmetric matrix with spectrum $\left\{\lambda_{1},\lambda _{2},\ldots,\lambda _{n}\right\}$.
\\ \\
$iii)$ Let $C=[c_{ij}]$ be a positive centrosymmetric matrix with spectrum $\Lambda$. By Theorem \ref{SFC}, we assume that $C\in \mathcal{CS}_{\lambda_{1}}$. We choose $\epsilon$, with $0< \epsilon < n\min\{c_{ij}\}$. Then, from Lemma \ref{cent} and Theorem \ref{Brauer}, $C-\frac{\epsilon}{n}\mathbf{e}\mathbf{e}^{\textsuperscript{T}}$ is a positive centrosymmetric matrix with spectrum $\left\{\lambda_{1}-\epsilon,\lambda _{2},\ldots,\lambda _{n}\right\}$.
\end{proof}\ \\

In \cite{Julio3} the authors have proved that if $\Lambda=\left\{\lambda_{1}, \mu_{1},\overline{\mu}_{1},\ldots,\mu_{m},\overline{\mu}_{m}\right\}$ has only one real positive number and $m$ pairs of conjugated complex numbers with $m$ being odd, then $\Lambda$ cannot be the spectrum of a centrosymmetric nonnegative matrix. Therefore, we focus our attention on the following set:
\begin{align}\label{Ctilde}
\widetilde{\mathbb{C}}^{\textsuperscript{n-1}} = \left\{\Gamma =\{\lambda _{2},\ldots,\lambda _{n}\} : \Gamma =\overline{\Gamma}\right\} \setminus \left\{\{\mu_{1},\overline{\mu}_{1},\ldots,\mu_{m},\overline{\mu}_{m}\} : \text{odd} \ m\right\}.
\end{align}

The following result shows that given $\Gamma=\{\lambda_{2},\ldots,\lambda_{n}\}\in \widetilde{\mathbb{C}}^{\textsuperscript{n-1}}$, it is always possible to find a $\lambda>0$ such that $\{\lambda,\lambda_{2},\ldots,\lambda_{n}\}\in \mathcal{CR}_{n}$:
\begin{theorem}\label{L4}
Given $\Gamma=\{\lambda _{2},\ldots,\lambda _{n}\}\in \widetilde{\mathbb{C}}^{\textsuperscript{n-1}}$, let
\begin{align*}
\mathscr{C}=\left\{\lambda>0 \ : \ \{\lambda,\lambda _{2},\ldots,\lambda _{n}\}\in \mathcal{CR}_{n}\right\}.
\end{align*}
Then $\mathscr{C}\neq \emptyset$.
\end{theorem}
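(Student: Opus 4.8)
The plan is to construct, for a suitably large $\lambda>0$, an explicit nonnegative centrosymmetric matrix $C$ with $\sigma(C)=\{\lambda\}\cup\Gamma$, using the block reduction of Theorem \ref{Cen 1}. Write $n=2m$ or $n=2m+1$. By Theorem \ref{Cen 1} a centrosymmetric $C$ is determined by two $m\times m$ matrices $S=A+JB$ (the ``symmetric'' block) and $T=A-JB$ (the ``skew'' block), together with $c,\mathbf{x},\mathbf{y}$ in the odd case, and $\sigma(C)=\sigma(S)\cup\sigma(T)$ in the even case, resp. $\sigma(G)\cup\sigma(T)$ with $G=\left[\begin{smallmatrix} c & \sqrt2\,\mathbf{y}^{\textsuperscript{T}}\\ \sqrt2\,\mathbf{x} & S\end{smallmatrix}\right]$ in the odd case. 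Recovering $A=\tfrac12(S+T)$ and $B=\tfrac12 J(S-T)$, one checks that $C\ge 0$ is equivalent to $A\ge 0$ and $B\ge 0$ (and $c,\mathbf{x},\mathbf{y}\ge 0$ in the odd case), i.e. to the entrywise domination $S\ge|T|$ (together with $G\ge 0$). So it suffices to (i) split $\Gamma$ into a self-conjugate ``symmetric part'' $\Gamma_{s}$ and a self-conjugate ``skew part'' $\Gamma_{k}$ of the correct sizes; (ii) realize $\Gamma_{k}$ by any real matrix $T$ (a block-diagonal matrix with $1\times1$ blocks for real entries and $2\times2$ blocks $\left[\begin{smallmatrix}a & b\\ -b & a\end{smallmatrix}\right]$ for conjugate pairs); and (iii) realize $\{\lambda\}\cup\Gamma_{s}$ by a nonnegative matrix whose relevant block has all entries exceeding $\max_{i,j}|T_{ij}|$.

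For the splitting in step (i), let $R$ be the number of real entries and $P$ the number of conjugate pairs of $\Gamma$, so $R+2P=n-1$, and note that the self-conjugate sublists of $\Gamma$ obtained from whole reals and whole pairs have exactly the sizes $\{r+2p:0\le r\le R,\ 0\le p\le P\}$. When $n=2m$ the symmetric block carries $m$ eigenvalues, so $\Gamma_{s}$ must have size $m-1$ and $\Gamma_{k}$ size $m$; here $n-1=2m-1$ is odd, which forces $R\ge1$, and with one real element available every size in $\{0,\dots,n-1\}$ is attainable, so the split exists. When $n=2m+1$ both blocks carry $m$ eigenvalues from $\Gamma$, so a self-conjugate sublist of size $m$ is needed; the attainable sizes contain $m$ unless $R=0$ and $m$ is odd, and $R=0$ with $P=m$ odd is precisely the family excluded from $\widetilde{\mathbb{C}}^{\textsuperscript{n-1}}$ in \eqref{Ctilde}. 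Thus for every admissible $\Gamma$ the split exists. This parity analysis is the conceptual heart of the argument: the sole obstruction to halving $\Gamma$ is exactly the forbidden family.

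For step (iii) I would invoke the standard fact of the NIEP (see \cite{NIEP}) that every self-conjugate list $\Gamma_{s}$ becomes realizable once a sufficiently large positive number is appended as its Perron eigenvalue; pick such a nonnegative realization $S_{0}$ of $\{\lambda_{0}\}\cup\Gamma_{s}$ (an $(m+1)\times(m+1)$ matrix $G_{0}$ in the odd case). By the constant row sum reduction of Theorem \ref{SFC}, in its general nonnegative form (cf. \cite{Guo}), we may assume $S_{0}\mathbf{e}=\lambda_{0}\mathbf{e}$; then Brauer's Theorem \ref{Brauer} with the Perron vector $\mathbf{e}$ shows that adding a positive multiple of $\mathbf{e}\mathbf{e}^{\textsuperscript{T}}$ shifts only the Perron root while raising every entry by a common positive amount. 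Taking that amount large makes all entries exceed $\max_{i,j}|T_{ij}|$, so $S\ge|T|$ and $G\ge0$ hold, whence $A,B\ge0$ and $C\ge0$ is centrosymmetric with $\sigma(C)=\{\lambda\}\cup\Gamma$ for the resulting $\lambda=\lambda_{0}+t$. Choosing $\lambda$ larger than $\rho(T)$ and than every $|\mu|$ with $\mu\in\Gamma$ makes $\lambda$ the Perron eigenvalue of $C$, so $\lambda\in\mathscr{C}$ and $\mathscr{C}\neq\emptyset$.

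The step I expect to require the most care is organizational rather than computational: verifying that $C\ge0$ collapses cleanly to the single condition $S\ge|T|$ after the $J$-conjugations, and, in the odd case, that the border data $c,\mathbf{x},\mathbf{y}$ inherited from a nonnegative $G$ are automatically nonnegative; and confirming that the Perron root genuinely lies in the symmetric block, which is exactly what the closing assertions of Theorem \ref{Cen 1} provide. The parity bookkeeping of step (ii), and its precise coincidence with the excluded set \eqref{Ctilde}, is the one place where the hypothesis $\Gamma\in\widetilde{\mathbb{C}}^{\textsuperscript{n-1}}$ is indispensable.
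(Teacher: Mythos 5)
Your proposal is correct and follows essentially the same route as the paper: split $\Gamma$ into two self-conjugate halves, realize the ``skew'' half by a real block-diagonal matrix $T$, realize the ``symmetric'' half with a large appended Perron root by a nonnegative matrix whose entries dominate $\vert T\vert$ entrywise, and assemble the centrosymmetric matrix via the block structure of Theorem \ref{Cen 1}. The differences are minor — the paper builds the realizing blocks $E_{1},F_{1}$ and the dominating vector $\mathbf{q}$ explicitly (keeping the construction algorithmic) where you invoke the general NIEP folklore plus Brauer with $\mathbf{e}\mathbf{e}^{\textsuperscript{T}}$ — and your explicit parity analysis of when the self-conjugate split exists, matching exactly the family excluded in \eqref{Ctilde}, makes precise a point the paper's proof leaves implicit.
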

\begin{proof}
Let $\Gamma\cup \{0\}$. If $n=2m$, we consider $\Gamma\cup \{0\}=\Gamma _{1}\cup \Gamma_{2}$, where
\begin{equation*}
\Gamma_{1}=\{0,\lambda_{2},\ldots,\lambda_{m}\},\ \ \ \ \Gamma_{2}=\{\lambda_{m+1},\ldots,\lambda_{n}\}.
\end{equation*}
In $\Gamma_{1}$, let $\lambda_{2}\geq \cdots \geq\lambda_{p-1}$ be real numbers and let $\lambda_{p}, \overline{\lambda}_{p},\ldots,\lambda_{m-1}, \overline{\lambda}_{m-1}=\lambda_{m}$ be complex nonreal numbers. In $\Gamma_{2}$, let $\lambda_{m+1}\geq \cdots \geq\lambda_{m+r}$ be real numbers and let $\lambda_{m+r+1}, \overline{\lambda}_{m+r+1},\ldots,\lambda_{n-1}, \overline{\lambda}_{n-1}=\lambda_{n}$ be complex nonreal numbers. Let us consider the $m\times m$ matrices $E_{1}=[e_{ij}]$ and $F_{1}=[f_{ij}]$, given by
{\scriptsize
\begin{align}\label{E1}
E_{1}&=
\begin{bmatrix}
0 \\
-\lambda_{2} & \lambda_{2} \\
\vdots & & \ddots \\
-\lambda_{p-1} & & &\lambda_{p-1} \\
-\Re \lambda_{p}-\Im \lambda_{p} & & & & \Re \lambda_{p} & \Im \lambda_{p} \\
-\Re \lambda_{p}+\Im \lambda_{p} & & & & -\Im \lambda_{p} & \Re \lambda_{p} \\
\vdots & & & & & &\ddots \\
-\Re \lambda_{m-1}-\Im \lambda_{m-1} & & & & & & & \Re \lambda_{m-1} & \Im \lambda_{m-1} \\
-\Re \lambda_{m-1}+\Im \lambda_{m-1} & & & & & & & -\Im \lambda_{m-1} & \Re \lambda_{m-1}
\end{bmatrix}
\end{align}
}
and
{\scriptsize
\begin{align}\label{E2}
F_{1}&=
\begin{bmatrix}
\lambda_{m+1} \\
& \lambda_{m+2} \\
& & \ddots \\
& & & \lambda_{m+r} \\
& & & & \Re \lambda_{m+r+1} & \Im \lambda_{m+r+1} \\
& & & & -\Im \lambda_{m+r+1} & \Re \lambda_{m+r+1} \\
& & & & & & \ddots \\
& & & & & & & \Re \lambda_{n-1} & \Im \lambda_{n-1} \\
& & & & & & & -\Im \lambda_{n-1} & \Re \lambda_{n-1}
\end{bmatrix},
\end{align}
}\ \\
such that $\sigma(E_{1})=\Gamma_{1}$ and $\sigma(F_{1})=\Gamma_{2}$. Note that $E_{1}\in \mathcal{CS}_{0}$. By defining the vector $\mathbf{q}^{\textsuperscript{T}}=\begin{bmatrix}q_{1} \ \cdots \ q_{m}\end{bmatrix}\geq 0$, where
\begin{equation*}
q_{j}=\max\limits_{i=1,\ldots,m}\left\{\big\vert e_{ij}+f_{ij} \big\vert, \ \big\vert e_{ij}-f_{ij} \big\vert \right\}, \ \ j=1,\ldots,m,
\end{equation*}
we obtain that $E_{1}+\mathbf{eq}^{\textsuperscript{T}}$ is a nonnegative matrix and by Theorem \ref{Brauer}, it has spectrum $\{\mathbf{e}^{\textsuperscript{T}}\mathbf{q},\lambda _{2},\ldots,\lambda _{m}\}$. Also,
\begin{align*}
(E_{1}+F_{1})+\mathbf{eq}^{\textsuperscript{T}}\geq 0 \quad \text{and} \quad (E_{1}-F_{1})+\mathbf{eq}^{\textsuperscript{T}}\geq 0.
\end{align*}
Therefore, from Theorem \ref{Cen 1},
\begin{equation*}
C=\frac{1}{2}\begin{bmatrix}
(E_{1}+F_{1})+\mathbf{eq}^{\textsuperscript{T}} & \left((E_{1}-F_{1})+\mathbf{eq}^{\textsuperscript{T}}\right)J \\ \\
J\left((E_{1}-F_{1})+\mathbf{eq}^{\textsuperscript{T}}\right) & J((E_{1}+F_{1})+\mathbf{eq}^{\textsuperscript{T}})J
\end{bmatrix},
\end{equation*}
is a nonnegative centrosymmetric matrix with spectrum $\{\mathbf{e}^{\textsuperscript{T}}\mathbf{q},\lambda _{2},\ldots,\lambda _{n}\}$. Thus, $\mathbf{e}^{\textsuperscript{T}}\mathbf{q}\in \mathscr{C}$.
\\
If $n=2m+1$, we consider $\Gamma \cup\{0\}=\Gamma _{1}\cup \Gamma_{2}$, where
\begin{equation*}
\Gamma _{1}=\{0,\lambda_{2},\ldots,\lambda_{m+1}\},\ \ \ \ \Gamma_{2}=\{\lambda_{m+2},\ldots,\lambda_{n}\}.
\end{equation*}
Similarly as in \eqref{E1} and \eqref{E2}, we can construct two matrices $E_{2}=[e_{ij}]$ and $F_{2}=[f_{ij}]$ such that $\sigma(E_{2})=\Gamma_{1}$ \ and \ $\sigma(F_{2})=\Gamma_{2}$ with $E_{2}\in \mathcal{CS}_{0}$. Let us write
\begin{equation*}
E_{2}=\begin{bmatrix}
0 & \mathbf{0}^{\textsuperscript{T}} \\
\mathbf{x} & \widehat{E}_{2}
\end{bmatrix},
\end{equation*}
where $\widehat{E}_{2}=[\widehat{e}_{ij}]$ is the $m\times m$ principal submatrix obtained to deleting the first row and first column of $E_{2}$. By defining the nonnegative vector $\mathbf{q}^{\textsuperscript{T}}=\begin{bmatrix}q_{1} \ \cdots \ q_{m+1}\end{bmatrix}=\begin{bmatrix}q_{1} \ \ {\bf \widehat{q}}^{\textsuperscript{T}}\end{bmatrix}$, where
\begin{align*}
q_{1}&=\max\limits_{i=1,\ldots,m+1}\left\{\big\vert e_{i1} \big\vert \right\} \\
q_{j}&=\max\limits_{i=1,\ldots,m+1}\left\{\big\vert \widehat{e}_{ij}+f_{ij} \big\vert, \ \big\vert \widehat{e}_{ij}-f_{ij} \big\vert \right\}, \ \ j=2,\ldots,m+1,
\end{align*}
we obtain that
\begin{equation*}
E_{2}+{\bf e}{\bf q}^{\textsuperscript{T}}=\begin{bmatrix}
0 & \mathbf{0} \\
\mathbf{x} & \widehat{E}_{2}
\end{bmatrix}+\begin{bmatrix}1 \\ {\bf e}\end{bmatrix}\begin{bmatrix} q_{1} & {\bf \widehat{q}}^{\textsuperscript{T}}\end{bmatrix}
=\begin{bmatrix}
q_{1} & \widehat{\mathbf{q}}^{\textsuperscript{T}} \\
\mathbf{x}+q_{1}{\bf e} & \widehat{E}_{2}+{\bf e}\widehat{{\bf q}}^{\textsuperscript{T}}
\end{bmatrix}
\end{equation*}
is a nonnegative matrix and by Theorem \ref{Brauer}, it has spectrum $\{\mathbf{e}^{\textsuperscript{T}}\mathbf{q},\lambda _{2},\ldots,\lambda _{m+1}\}$. Also,
\begin{align*}
(\widehat{E}_{2}+F_{2})+{\bf e}\widehat{{\bf q}}^{\textsuperscript{T}}\geq 0 \quad \text{and} \quad (\widehat{E}_{2}-F_{2})+{\bf e}\widehat{{\bf q}}^{\textsuperscript{T}}\geq 0.
\end{align*}
Therefore, from Theorem \ref{Cen 1},
\begin{equation*}
C=\frac{1}{2}\begin{bmatrix}
(\widehat{E}_{2}+F_{2})+{\bf e}\widehat{{\bf q}}^{\textsuperscript{T}} & \sqrt{2}(\mathbf{x}+q_{1}{\bf e}) & ((\widehat{E}_{2}-F_{2})+{\bf e}\widehat{{\bf q}}^{\textsuperscript{T}})J \\ \\
\sqrt{2}\widehat{\mathbf{q}}^{\textsuperscript{T}} & 2q_{1} & \sqrt{2}\widehat{\mathbf{q}}^{\textsuperscript{T}}J \\ \\
J((\widehat{E}_{2}-F_{2})+{\bf e}\widehat{{\bf q}}^{\textsuperscript{T}})& \sqrt{2}J(\mathbf{x}+q_{1}{\bf e}) & J((\widehat{E}_{2}+F_{2})+{\bf e}\widehat{{\bf q}}^{\textsuperscript{T}})J
\end{bmatrix},
\end{equation*}
is a nonnegative centrosymmetric matrix with spectrum $\{\mathbf{e}^{\textsuperscript{T}}\mathbf{q},\lambda _{2},\ldots,\lambda _{n}\}$. Thus, $\mathbf{e}^{\textsuperscript{T}}\mathbf{q}\in \mathscr{C}$.
\end{proof}\ \\

The previous result allows us to guarantee the existence of a minimum element $\lambda_{\Gamma}>0$ such that $\{\lambda_{\Gamma}\}\cup \Gamma\in \mathcal{CR}_{n}$, for each $\Gamma\in \widetilde{\mathbb{C}}^{\textsuperscript{n-1}}$. More precisely:
\begin{theorem}
Let $\Gamma=\{\lambda_{2},\ldots,\lambda_{n}\}\in \widetilde{\mathbb{C}}^{\textsuperscript{n-1}}$, where $\widetilde{\mathbb{C}}^{\textsuperscript{n-1}}$ is as in \eqref{Ctilde}. Then there exists a real number $\lambda_{\Gamma}$ with $\lambda_{\Gamma}\geq \max\limits_{i=2,\ldots, n}\{\vert \lambda_{i} \vert\}$, such that
\begin{equation*}
\{\lambda,\lambda_{2},\ldots,\lambda_{n}\}\in \mathcal{CR}_{n}
\end{equation*}
if and only if $\lambda\geq \lambda_{\Gamma}$.
\end{theorem}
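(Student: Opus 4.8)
The plan is to take $\lambda_{\Gamma}:=\inf\mathscr{C}$, where $\mathscr{C}$ is the set defined in Theorem \ref{L4}, and to prove that $\mathscr{C}$ coincides with the ray $[\lambda_{\Gamma},\infty)$ with the endpoint attained. The argument rests on three features of $\mathscr{C}$: it is nonempty, it is bounded below, and it is closed upward; the only delicate point is the attainment of the infimum.

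First I would note that $\mathscr{C}\neq\emptyset$ by Theorem \ref{L4}, so $\lambda_{\Gamma}$ is a well-defined real number. To bound $\mathscr{C}$ from below, observe that any $\lambda\in\mathscr{C}$ is, by the paper's convention, the Perron eigenvalue of a nonnegative centrosymmetric realizing matrix; hence $\lambda=\rho\geq\vert\lambda_{i}\vert$ for every $i=2,\ldots,n$. Consequently $\lambda\geq M:=\max_{i=2,\ldots,n}\vert\lambda_{i}\vert$ for all $\lambda\in\mathscr{C}$, and therefore $\lambda_{\Gamma}=\inf\mathscr{C}\geq M$, which is exactly the lower bound claimed in the statement. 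The upward closure is immediate from Corollary \ref{CP}$(i)$: if $\lambda\in\mathscr{C}$ and $\mu>\lambda$, writing $\mu=\lambda+\epsilon$ with $\epsilon=\mu-\lambda>0$ gives $\{\mu,\lambda_{2},\ldots,\lambda_{n}\}\in\widehat{\mathcal{CR}}_{n}\subseteq\mathcal{CR}_{n}$, so $\mu\in\mathscr{C}$. Thus $\mathscr{C}$ is an interval unbounded above whose left endpoint is $\lambda_{\Gamma}$.

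The main obstacle is to show that the infimum is attained, i.e. that $\{\lambda_{\Gamma},\lambda_{2},\ldots,\lambda_{n}\}\in\mathcal{CR}_{n}$. Here I would invoke Corollary \ref{CP}$(ii)$. Fix $\epsilon>0$; by definition of the infimum there is $\mu\in\mathscr{C}$ with $\lambda_{\Gamma}\leq\mu<\lambda_{\Gamma}+\epsilon$, and the upward closure established above yields $\{\lambda_{\Gamma}+\epsilon,\lambda_{2},\ldots,\lambda_{n}\}\in\mathcal{CR}_{n}$. Since this holds for every $\epsilon>0$, Corollary \ref{CP}$(ii)$ gives $\{\lambda_{\Gamma},\lambda_{2},\ldots,\lambda_{n}\}\in\mathcal{CR}_{n}$, as desired. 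Alternatively, one can re-run the compactness argument of Theorem \ref{SFC}: choosing $\lambda_{k}\downarrow\lambda_{\Gamma}$ in $\mathscr{C}$ and realizing matrices $C_{k}\in\mathcal{CS}_{\lambda_{k}}$ via Theorem \ref{SFC}, the bound $\Vert C_{k}\Vert_{\infty}=\lambda_{k}$ confines the $C_{k}$ to a compact set, and a convergent subsequence produces a nonnegative centrosymmetric limit with spectrum $\{\lambda_{\Gamma},\lambda_{2},\ldots,\lambda_{n}\}$.

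Finally I would assemble the equivalence. If $\lambda\geq\lambda_{\Gamma}$, then either $\lambda=\lambda_{\Gamma}$, already handled, or $\lambda>\lambda_{\Gamma}$, in which case upward closure applied to $\lambda_{\Gamma}$ gives $\{\lambda,\lambda_{2},\ldots,\lambda_{n}\}\in\mathcal{CR}_{n}$. Conversely, if $\{\lambda,\lambda_{2},\ldots,\lambda_{n}\}\in\mathcal{CR}_{n}$, then $\lambda$ is its Perron eigenvalue, so $\lambda\geq 0$; for $\lambda>0$ this means $\lambda\in\mathscr{C}$ and hence $\lambda\geq\inf\mathscr{C}=\lambda_{\Gamma}$, while if $\lambda=0$ the whole list vanishes and one checks directly that $\lambda_{\Gamma}=0$, so again $\lambda\geq\lambda_{\Gamma}$. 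This shows $\{\lambda,\lambda_{2},\ldots,\lambda_{n}\}\in\mathcal{CR}_{n}$ if and only if $\lambda\geq\lambda_{\Gamma}$, completing the proof.
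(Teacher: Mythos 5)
Your proof is correct and follows essentially the same route as the paper: define $\lambda_{\Gamma}:=\inf\mathscr{C}$, get nonemptiness from Theorem \ref{L4}, the lower bound $\lambda_{\Gamma}\geq\max_{i}\vert\lambda_{i}\vert$ from the Perron eigenvalue, upward closure from Corollary \ref{CP}$(i)$, and attainment of the infimum from Corollary \ref{CP}$(ii)$. The only difference is cosmetic: you apply Corollary \ref{CP}$(ii)$ directly to conclude $\{\lambda_{\Gamma},\lambda_{2},\ldots,\lambda_{n}\}\in\mathcal{CR}_{n}$, whereas the paper reaches the same conclusion by a contradiction argument built on the contrapositive of that corollary.
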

\begin{proof} We split the proof into two steps: \\
{\it Step 1. Existence of $\lambda_{\Gamma}$}: By Theorem \ref{L4} we know that
\begin{equation*}
\mathscr{C}=\left\{\lambda>0 \ : \ \{\lambda,\lambda _{2},\ldots,\lambda _{n}\}\in \mathcal{CR}_{n}\right\}\neq \emptyset.
\end{equation*}
Since $\lambda \geq \max\limits_{i=2,\ldots,n} \left\{\vert \lambda_{i} \vert\right\}$ for all $\lambda\in \mathscr{C}$, $\mathscr{C}$ is a subset of $\mathbb{R}$ bounded below. Therefore, there exists $\inf\mathscr{C}$. Now, we show that $\inf\mathscr{C}\in \mathscr{C}$. In fact, we suppose that $\inf\mathscr{C}\notin \mathscr{C}$. Then $\{\inf\mathscr{C},
\lambda_{2},\ldots,\lambda_{n}\}\notin \mathcal{CR}_{n}$. From Corollary \ref{CP}, there must be at least an $\epsilon>0$ such that
\begin{align}\label{I1}
\{\inf\mathscr{C}+\epsilon,\lambda_{2},\ldots,\lambda_{n}\}\notin \mathcal{CR}_{n}.
\end{align}
{\it Assertion.} For all $\lambda\in \mathscr{C}$, $\inf\mathscr{C}+\epsilon< \lambda$: In fact, if there exists a $\lambda\in
\mathscr{C}$ such that $\lambda< \inf\mathscr{C}+\epsilon$, then choosing $r=\inf\mathscr{C}+\epsilon-\lambda > 0$ we get that $\lambda+r=\inf\mathscr{C}+\epsilon$. Since $\lambda\in \mathscr{C}$, $\{\lambda,\lambda_{2},\ldots,\lambda_{n}\}\in \mathcal{CR}_{n}$ and therefore by Corollary \ref{CP}, $\{\lambda+r,\lambda_{2},\ldots,\lambda_{n}\}\in \mathcal{CR}_{n}$. Thus, $\{\inf\mathscr{C}+\epsilon,\lambda_{2},\ldots,\lambda_{n}\}\in \mathcal{CR}_{n}$, which is a contradiction with \eqref{I1}.
\\
From above assertion, $\inf\mathscr{C}+\epsilon$ is a lower bound of $\mathscr{C}$ with $\inf\mathscr{C} < \inf\mathscr{C}+\epsilon$, which is a contradiction. Therefore, $\inf\mathscr{C}\in \mathscr{C}$. Consequently, we define $\lambda_{\Gamma}:=\min\mathscr{C}=\inf\mathscr{C}$ and thus $\lambda_{\Gamma}\geq \max\limits_{i=2,\ldots, n}\{\vert \lambda_{i} \vert\}$. \\ \\
{\it Step 2. $\{\lambda,\lambda_{2},\ldots,\lambda_{n}\}\in \mathcal{CR}_{n}$ if and only if $\lambda\geq \lambda_{\Gamma}$.} \\
{\it If:} If $\{\lambda,\lambda_{2},\ldots,\lambda_{n}\}\in \mathcal{CR}_{n}$, then $\lambda\in \mathscr{C}$. Therefore, $\lambda\geq \min\mathscr{C}=\lambda_{\Gamma}$. \\ \\
{\it Only if:} Suppose that $\lambda\geq \lambda_{\Gamma}$. If $\lambda=\lambda_{\Gamma}$, the proof follows. If $\lambda > \lambda_{\Gamma}$, then considering $\epsilon=\lambda-\lambda_{\Gamma}>0$, $\lambda_{\Gamma}+\epsilon = \lambda$. Since $\{\lambda_{\Gamma},\lambda_{2},\ldots,\lambda_{n}\}\in \mathcal{CR}_{n}$, it follows from Corollary \ref{CP} that $\{\lambda_{\Gamma}+\epsilon,\lambda_{2},\ldots,\lambda_{n}\}\in \mathcal{CR}_{n}$, this is, $\{\lambda,\lambda_{2},\ldots,\lambda_{n}\}\in \mathcal{CR}_{n}$.
\end{proof}
\section{Main results}\label{s4}
In this section we establish some interesting spectral perturbation properties for nonnegative centrosymmetric matrices. Before, we present the following lemma, which will play an important role.
\begin{lemma}\label{L5} Let $C$ be an $n\times n$ nonnegative centrosymmetric matrix with Perron eigenvalue $\lambda_{1}$.
\begin{itemize}
\item[$i)$] If $C=\begin{bmatrix}
A & JBJ \\
B & JAJ
\end{bmatrix}\in \mathcal{CS}_{\lambda_{1}}$, then $A+JB\in \mathcal{CS}_{\lambda_{1}}$.
\item[$ii)$] If $C=\begin{bmatrix}
A & \mathbf{x} & JBJ \\
\mathbf{y}^{\textsuperscript{T}} & c & \mathbf{y}^{T}J \\
B & J\mathbf{x} & JAJ%
\end{bmatrix}\in \mathcal{CS}_{\lambda_{1}}$, then $\begin{bmatrix}
c & 2\mathbf{y}^{\textsuperscript{T}} \\
\mathbf{x} & A+JB
\end{bmatrix}\in \mathcal{CS}_{\lambda_{1}}$. Moreover, $\begin{bmatrix}
c & 2\mathbf{y}^{\textsuperscript{T}} \\
\mathbf{x} & A+JB
\end{bmatrix}$ is cospectral to $\begin{bmatrix}
c & \sqrt{2}\mathbf{y}^{\textsuperscript{T}} \\
\sqrt{2}\mathbf{x} & A+JB
\end{bmatrix}$.
\end{itemize}
\end{lemma}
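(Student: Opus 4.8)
The plan is to reduce both statements to the defining identity $M\mathbf{e}=\lambda_{1}\mathbf{e}$ of the class $\mathcal{CS}_{\lambda_{1}}$, exploiting the single arithmetic fact that the $m$-dimensional all-ones vector is fixed by the counteridentity: $J\mathbf{e}=\mathbf{e}$. Writing the row-sum eigenvector in block form and multiplying out, every occurrence of $J$ acting on an all-ones block collapses, and the desired conclusions fall out by inspection of individual block rows. For the final cospectrality claim I would exhibit an explicit diagonal similarity.

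For part $i)$ I would partition the $2m$-dimensional all-ones vector as $\begin{bmatrix}\mathbf{e}\\\mathbf{e}\end{bmatrix}$ and expand $C\mathbf{e}=\lambda_{1}\mathbf{e}$. Using $J\mathbf{e}=\mathbf{e}$, the top block row reads $A\mathbf{e}+JBJ\mathbf{e}=A\mathbf{e}+JB\mathbf{e}=\lambda_{1}\mathbf{e}$, which is precisely $(A+JB)\mathbf{e}=\lambda_{1}\mathbf{e}$; hence $A+JB\in\mathcal{CS}_{\lambda_{1}}$. For part $ii)$ I would partition the $(2m+1)$-dimensional all-ones vector as $\begin{bmatrix}\mathbf{e}\\1\\\mathbf{e}\end{bmatrix}$ and again use $J\mathbf{e}=\mathbf{e}$ to simplify the three block rows of $C\mathbf{e}=\lambda_{1}\mathbf{e}$. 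The middle (scalar) row yields $c+2\mathbf{y}^{\textsuperscript{T}}\mathbf{e}=\lambda_{1}$, while the top block row yields $\mathbf{x}+(A+JB)\mathbf{e}=\lambda_{1}\mathbf{e}$. These are exactly the two block components of the single identity $\begin{bmatrix}c&2\mathbf{y}^{\textsuperscript{T}}\\\mathbf{x}&A+JB\end{bmatrix}\begin{bmatrix}1\\\mathbf{e}\end{bmatrix}=\lambda_{1}\begin{bmatrix}1\\\mathbf{e}\end{bmatrix}$, establishing membership in $\mathcal{CS}_{\lambda_{1}}$.

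For the cospectrality I would conjugate by the $(m+1)\times(m+1)$ diagonal matrix $D=\mathrm{diag}\{d,1,\ldots,1\}$. Direct block multiplication gives $D\begin{bmatrix}c&2\mathbf{y}^{\textsuperscript{T}}\\\mathbf{x}&A+JB\end{bmatrix}D^{-1}=\begin{bmatrix}c&2d\,\mathbf{y}^{\textsuperscript{T}}\\d^{-1}\mathbf{x}&A+JB\end{bmatrix}$, so the choice $d=1/\sqrt{2}$ simultaneously turns $2\mathbf{y}^{\textsuperscript{T}}$ into $\sqrt{2}\mathbf{y}^{\textsuperscript{T}}$ and $\mathbf{x}$ into $\sqrt{2}\mathbf{x}$, producing the matrix in the claim. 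Since the two matrices are similar, they are cospectral.

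I do not expect a genuine obstacle here: the entire argument is block bookkeeping driven by $J\mathbf{e}=\mathbf{e}$. The only points requiring care are remembering that $J$ fixes each all-ones block (so the $JBJ$, $JAJ$, and $\mathbf{y}^{\textsuperscript{T}}J$ terms all simplify), and noticing that the single value $d=1/\sqrt{2}$ resolves both off-diagonal scaling requirements at once — the interplay between the factor $2$ and the factor $\sqrt{2}$ is exactly what makes the similarity consistent.
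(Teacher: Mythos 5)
Your proposal is correct and follows essentially the same route as the paper: both parts reduce to expanding $C\mathbf{e}=\lambda_{1}\mathbf{e}$ blockwise and collapsing every $J$ acting on an all-ones block via $J\mathbf{e}=\mathbf{e}$. The only cosmetic difference is in the cospectrality step, where you exhibit an explicit diagonal similarity $D=\mathrm{diag}\{1/\sqrt{2},1,\ldots,1\}$ while the paper writes out the equivalent correspondence of eigenvectors $\begin{bmatrix} z \\ \widetilde{\mathbf{z}}\end{bmatrix}\mapsto\begin{bmatrix} z \\ \sqrt{2}\widetilde{\mathbf{z}}\end{bmatrix}$; your version is, if anything, slightly cleaner since similarity gives cospectrality with multiplicities immediately.
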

\begin{proof}
$i)$ Let $C=\begin{bmatrix}
A & JBJ \\
B & JAJ
\end{bmatrix}\in \mathcal{CS}_{\lambda_{1}}$. Then
\begin{align*}
C{\bf e} \ = \ \begin{bmatrix}
A & JBJ \\
B & JAJ
\end{bmatrix}\begin{bmatrix} {\bf e} \\ {\bf e} \end{bmatrix} = \begin{bmatrix}
(A+JB){\bf e} \\
(B+JA){\bf e}
\end{bmatrix}= \begin{bmatrix} \lambda_{1}{\bf e} \\ \lambda_{1}{\bf e} \end{bmatrix},
\end{align*} where $\mathbf{e}$ has been partitioned conforms to the partition of $C$.
Therefore, $(A+JB){\bf e}=\lambda_{1}{\bf e}$. \\ \\
$ii)$ \ Let $C=\begin{bmatrix}
A & \mathbf{x} & JBJ \\
\mathbf{y}^{\textsuperscript{T}} & c & \mathbf{y}^{\textsuperscript{T}}J \\
B & J\mathbf{x} & JAJ%
\end{bmatrix}\in \mathcal{CS}_{\lambda_{1}}$. Then
\begin{align*}
C{\bf e} =  \begin{bmatrix}
A & \mathbf{x} & JBJ \\
\mathbf{y}^{\textsuperscript{T}} & c & \mathbf{y}^{\textsuperscript{T}}J \\
B & J\mathbf{x} & JAJ
\end{bmatrix}\begin{bmatrix} {\bf e} \\ 1 \\ {\bf e} \end{bmatrix} =\begin{bmatrix}
(A+JB){\bf e}+{\bf x} \\
2\mathbf{y}^{\textsuperscript{T}}{\bf e}+c \\
(B+JA){\bf e}+J{\bf x}
\end{bmatrix} \ = \
\begin{bmatrix} \lambda_{1}{\bf e} \\ \lambda_{1} \\ \lambda_{1}{\bf e} \end{bmatrix}.
\end{align*}
Therefore, $\begin{bmatrix}
c & 2\mathbf{y}^{\textsuperscript{T}} \\
\mathbf{x} & A+JB
\end{bmatrix}\begin{bmatrix} 1 \\ {\bf e} \end{bmatrix}=\lambda_{1}\begin{bmatrix} 1 \\ {\bf e}\end{bmatrix}$. Moreover, if $\lambda\in \sigma\left(\begin{bmatrix}
c & 2\mathbf{y}^{\textsuperscript{T}} \\
\mathbf{x} & A+JB
\end{bmatrix}\right)$ with eigenvector ${\bf z}=\begin{bmatrix} z \\ \widetilde{{\bf z}} \end{bmatrix}$, then $\lambda\in \sigma\left(\begin{bmatrix}
c & \sqrt{2}\mathbf{y}^{\textsuperscript{T}} \\
\sqrt{2}\mathbf{x} & A+JB
\end{bmatrix}\right)$ with eigenvector ${\bf w}=\begin{bmatrix} z \\ \sqrt{2}\widetilde{{\bf z}} \end{bmatrix}$.
\end{proof}
\subsection{Perturbing real eigenvalues of nonnegative centrosymmetric matrices}
In this section, we establish a property of real eigenvalues of nonnegative centrosymmetric matrices, which is shown in the following theorem.
\begin{theorem}\label{Guo1}
If $\Lambda=\{\lambda_{1},\lambda_{2},\lambda_{3},\ldots,\lambda_{n}\}\in \mathcal{CR}_{n}$ and $\lambda_{2}$ is a real number, then for all $t\geq 0$, the lists
\begin{align*}
\Lambda^{+}_{t}&=\{\lambda_{1}+t,\lambda_{2}+t,\lambda_{3},\ldots,\lambda_{n}\}\in \mathcal{CR}_{n} \\
\Lambda^{-}_{t}&=\{\lambda_{1}+t,\lambda_{2}-t,\lambda_{3},\ldots,\lambda_{n}\}\in \mathcal{CR}_{n}.
\end{align*}
\end{theorem}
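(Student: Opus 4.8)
The plan is to pass to the half-size blocks furnished by the centrosymmetric structure and realize each eigenvalue shift by a single Brauer rank-one update, using throughout that nonnegativity of a centrosymmetric matrix is equivalent to an entrywise domination between its two blocks.

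First I would normalize the data. By Theorem \ref{SFC} I may assume $C\in\mathcal{CS}_{\lambda_1}$, so that $\mathbf{e}$ is a right Perron eigenvector of $C$. Consider first $n=2m$ and write $C=\begin{bmatrix}A & JBJ\\ B & JAJ\end{bmatrix}$ with $A,B\geq 0$. Setting $P=A+JB$ and $Q=A-JB$, Theorem \ref{Cen 1}$(i)$ gives $\sigma(C)=\sigma(P)\cup\sigma(Q)$ with $\rho(P)=\lambda_1$, and Lemma \ref{L5}$(i)$ gives $P\mathbf{e}=\lambda_1\mathbf{e}$. The identities $P+Q=2A$ and $P-Q=2JB$ are the whole point: for arbitrary $m\times m$ matrices $P',Q'$, the matrix $C'=\begin{bmatrix}A' & JB'J\\ B' & JA'J\end{bmatrix}$ with $A'=\tfrac12(P'+Q')$ and $B'=\tfrac12 J(P'-Q')$ is centrosymmetric, has $\sigma(C')=\sigma(P')\cup\sigma(Q')$, and is nonnegative if and only if $P'+Q'\geq 0$ and $P'-Q'\geq 0$. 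Thus it suffices to perturb $P$ and $Q$ so as to move $\lambda_1\in\sigma(P)$ to $\lambda_1+t$ and the real number $\lambda_2$ to $\lambda_2\pm t$, all the while keeping these two entrywise inequalities.

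The real eigenvalue $\lambda_2$ lies either in $\sigma(P)$ or in $\sigma(Q)$, and this dichotomy drives the proof. If $\lambda_2\in\sigma(P)$, then $P$ is an $m\times m$ nonnegative matrix whose spectrum contains the Perron root $\lambda_1$ and the real number $\lambda_2$, so Guo's Theorem \ref{Guo} produces a nonnegative $P'$ with the desired shifted spectrum; crucially, Remark \ref{R1} guarantees that Guo's construction satisfies $P'\geq P$. Keeping $Q'=Q$, the domination $P'\geq P$ yields $P'\pm Q\geq P\pm Q\geq 0$ at once, and $C'$ is the required matrix. If instead $\lambda_2\in\sigma(Q)$, I would fix a right eigenvector $\mathbf{w}$ of $Q$ for $\lambda_2$ and an index $k$ with $|w_k|=\max_i|w_i|>0$, and then concentrate both rank-one updates on column $k$: put $P'=P+t\,\mathbf{e}\mathbf{e}_k^{T}$ and $Q'=Q+s\tfrac{t}{w_k}\,\mathbf{w}\mathbf{e}_k^{T}$, where $s=+1$ for $\Lambda^{+}_{t}$ and $s=-1$ for $\Lambda^{-}_{t}$. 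By Theorem \ref{Brauer} (applied with the eigenvectors $\mathbf{e}$ and $\mathbf{w}$) these raise $\lambda_1$ to $\lambda_1+t$ and move $\lambda_2$ to $\lambda_2\pm t$ while fixing the remaining eigenvalues. Only column $k$ of $P'\pm Q'$ differs from $P\pm Q$, and there its $i$-th entry equals $(P\pm Q)_{ik}+t\pm s\tfrac{t}{w_k}w_i\geq t-t=0$, because $|w_i|\leq|w_k|$; hence both inequalities hold for every $t\geq 0$ and $C'$ is nonnegative.

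For $n=2m+1$ I would run the same argument through Theorem \ref{Cen 1}$(ii)$ and Lemma \ref{L5}$(ii)$, with the symmetric block replaced by the nonnegative matrix $\mathcal{S}=\begin{bmatrix}c & 2\mathbf{y}^{T}\\ \mathbf{x} & A+JB\end{bmatrix}\in\mathcal{CS}_{\lambda_1}$, whose Perron root is $\lambda_1$, and the skew block still $Q=A-JB$. The symmetric subcase again appeals to Guo's Theorem \ref{Guo} and Remark \ref{R1}; in the skew subcase I would raise $\lambda_1$ by a Brauer update concentrated on the column of $\mathcal{S}$ sitting above column $k$ of the $(A+JB)$-block, so that $\mathbf{x},\mathbf{y},c$ remain nonnegative and the same column-$k$ cancellation persists after reconstruction. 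I expect the single genuine obstacle to be exactly this nonnegativity-for-all-$t$ requirement: in the symmetric subcase it is handed to us by the entrywise domination of Remark \ref{R1}, while in the skew subcase it is precisely what forces concentrating both perturbations on the column indexed by the largest component of $\mathbf{w}$, so that the Perron gain $t$ dominates the offsetting term $|t\,w_i/w_k|$ pointwise.
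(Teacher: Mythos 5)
Your proposal is correct and follows essentially the same route as the paper: normalize to $\mathcal{CS}_{\lambda_1}$ via Theorem \ref{SFC}, reduce to the blocks $A+JB$ and $A-JB$ (resp.\ the bordered block in the odd case), invoke Guo's theorem together with the domination of Remark \ref{R1} when $\lambda_2$ lies in the symmetric block, and use Brauer rank-one updates concentrated on the column of the extremal component of the eigenvector $\mathbf{w}$ when $\lambda_2$ lies in the skew block. Your normalization $|w_k|=\max_i|w_i|$ is in fact slightly cleaner than the paper's $w_k=\max_i w_i$, but the argument is otherwise the same.
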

\begin{proof}
Let $t\geq 0$. We split the proof into two cases: \\
{\it Case 1, even $n$}: Since $\Lambda\in \mathcal{CR}_{n}$, let $C=\begin{bmatrix} A & JBJ \\ B & JAJ \end{bmatrix}$ be a nonnegative centrosymmetric matrix with spectrum $\Lambda$. By Theorem \ref{SFC}, we can suppose that $C \in \mathcal{CS}_{\lambda_{1}}$. Thus, by Lemma \ref{L5}, $A+JB\in \mathcal{CS}_{\lambda_{1}}$. Also, from Theorem \ref{Cen 1}, $C$ is orthogonally similar to
\begin{equation*}
\begin{bmatrix} A+JB & \\ & A-JB\end{bmatrix}.
\end{equation*}
Therefore, we have two situations: \\
$i)$ If $\lambda_{2}\in \sigma(A+JB)$, then we write without loss of generality
\begin{equation*}
\sigma(A+JB)=\{\lambda_{1},\lambda_{2},\ldots,\lambda_{\frac{n}{2}}\}
\quad \text{and} \quad
\sigma(A-JB)=\{\lambda_{\frac{n}{2}+1},\ldots,\lambda_{n}\}.
\end{equation*}
Consequently $\{\lambda_{1},\lambda_{2},\ldots,\lambda_{\frac{n}{2}}\}\in \mathbb{N}_{n}$. Thus, by Theorem \ref{Guo}, $\{\lambda_{1}+t,\lambda_{2}+t,\lambda_{3},\ldots,\lambda_{\frac{n}{2}}\}\in \mathbb{N}_{n}$. Let $X_{+t}$ be the nonnegative matrix established in Remark \ref{R1} such that $\sigma(X_{+t})=\{\lambda_{1}+t,\lambda_{2}+t, \lambda_{3},\ldots,\lambda_{\frac{n}{2}}\}$. Then,
\begin{align}\label{S4}
X_{+t} \ \geq \ A+JB.\end{align}
By \eqref{S4}, we obtain
\begin{align*}
X_{+t}+A-JB\geq 0 \quad \text{and} \quad X_{+t}-A+JB\geq 0.
\end{align*}
Therefore,
\begin{align*}
C_{+t}=
\frac{1}{2}\begin{bmatrix}
X_{+t}+A-JB & \left(X_{+t}-A+JB\right)J \\ \\
J\left(X_{+t}-A+JB\right) & J\left(X_{+t}+A-JB\right)J
\end{bmatrix}
\end{align*}
is a nonnegative centrosymmetric matrix with spectrum $\Lambda^{+}_{t}$.

Similarly, replacing $X_{+t}$ in \eqref{S4} by $X_{-t}$, with $\sigma(X_{-t})=\{\lambda_{1}+t,\lambda_{2}-t, \lambda_{3},\ldots,\lambda_{\frac{n}{2}}\}$, we obtain a nonnegative centrosymmetric matrix with spectrum $\Lambda^{-}_{t}$. \\ \\
$ii)$ If $\lambda_{2}\in \sigma(A-JB)$, then we write
\begin{equation*}
\sigma(A+JB)=\{\lambda_{1},\lambda_{3},\ldots,\lambda_{\frac{n}{2}+1}\}
\quad \text{and} \quad
\sigma(A-JB)=\{\lambda_{2},\lambda_{\frac{n}{2}+2},\ldots,\lambda_{n}\}.
\end{equation*}
Let ${\bf w}^{\textsuperscript{T}}=\begin{bmatrix}w_{1} \cdots w_{\frac{n}{2}}\end{bmatrix}\neq 0$ such that $(A-JB){\bf w}=\lambda_{2}{\bf w}$. Define by $w_{k}=\max\limits_{i=1,\ldots,\frac{n}{2}}\{w_{i}\}$ and we assume without loss of generality that $w_{k}\geq 1$. By Theorem \ref{Brauer},
$A-JB+\left(\frac{t}{w_{k}}\right){\bf w}{\bf e}_{k}^{\textsuperscript{T}}$ and $A+JB+t{\bf e}{\bf e}_{k}^{\textsuperscript{T}}$ have spectrum $\{\lambda_{2}+t,\lambda_{\frac{n}{2}+2},\ldots,\lambda_{n}\}$ and $\{\lambda_{1}+t,\lambda_{3},\ldots,\lambda_{\frac{n}{2}+1}\}$ respectively, where $A+JB+t{\bf e}{\bf e}_{k}^{\textsuperscript{T}}$ is a nonnegative matrix. Since
\begin{align*}
A+JB+t{\bf e}{\bf e}_{k}^{\textsuperscript{T}}+\left(A-JB+\left(\frac{t}{w_{k}}\right){\bf w}{\bf e}_{k}^{\textsuperscript{T}}\right) = 2A+t\left({\bf e}+\frac{{\bf w}}{w_{k}}\right){\bf e}_{k}^{\textsuperscript{T}} \geq 0
\intertext{and}
A+JB+t{\bf e}{\bf e}_{k}^{\textsuperscript{T}}-\left(A-JB+\left(\frac{t}{w_{k}}\right){\bf w}{\bf e}_{k}^{\textsuperscript{T}}\right) = 2JB+t\left({\bf e}-\frac{{\bf w}}{w_{k}}\right){\bf e}_{k}^{\textsuperscript{T}} \geq 0,
\end{align*}
we obtain that
\begin{align*}
C_{+t}=
\frac{1}{2}\begin{bmatrix}
2A+t\left({\bf e}+\frac{{\bf w}}{w_{k}}\right){\bf e}_{k}^{\textsuperscript{T}} & \left(2JB+t\left({\bf e}-\frac{{\bf w}}{w_{k}}\right){\bf e}_{k}^{\textsuperscript{T}}\right)J \\ \\
J\left(2JB+t\left({\bf e}-\frac{{\bf w}}{w_{k}}\right){\bf e}_{k}^{\textsuperscript{T}}\right) & J\left(2A+t\left({\bf e}+\frac{{\bf w}}{w_{k}}\right){\bf e}_{k}^{\textsuperscript{T}}\right)J
\end{bmatrix}
\end{align*}
is a nonnegative centrosymmetric matrix with $\sigma(C_{+t})=\Lambda^{+}_{t}$.

Similarly, replacing $A-JB+\left(\frac{t}{w_{k}}\right){\bf w}{\bf e}_{k}^{\textsuperscript{T}}$ in the previous argument by $A-JB-\left(\frac{t}{w_{k}}\right){\bf w}{\bf e}_{k}^{^{\textsuperscript{T}}}$, which has spectrum $\{\lambda_{2}-t,\lambda_{\frac{n}{2}+2},\ldots,\lambda_{n}\}$, we obtain a nonnegative centrosymmetric matrix with spectrum $\Lambda^{-}_{t}$. \\ \\
{\it Case 2, odd $n=2m+1$}: We reason in a similar way to the previous case. Since $\Lambda \in \mathcal{CR}_{n}$, let $C=\begin{bmatrix}
A & \mathbf{x} & JBJ \\
\mathbf{y}^{\textsuperscript{T}} & c & \mathbf{y}^{\textsuperscript{T}}J \\
B & J\mathbf{x} & JAJ
\end{bmatrix}\in \mathcal{CS}_{\lambda_{1}}$ be a nonnegative centrosymmetric matrix with spectrum $\Lambda$. From Theorem \ref{Cen 1} and Lemma \ref{L5}, $C$ is cospectral to
$\begin{bmatrix}
c & 2\mathbf{y}^{\textsuperscript{T}} & \\
\mathbf{x} & A+JB & \\
& & A-JB
\end{bmatrix}$, where $\begin{bmatrix}
c & 2\mathbf{y}^{\textsuperscript{T}} \\
\mathbf{x} & A+JB
\end{bmatrix}\in \mathcal{CS}_{\lambda_{1}}$. Therefore, we have two situations: \\ \\
$i)$ If $\lambda_{2}\in \sigma\left(\begin{bmatrix}
c & 2\mathbf{y}^{\textsuperscript{T}} \\
\mathbf{x} & A+JB
\end{bmatrix}\right)$, then we write
\begin{equation*}
\sigma\left(\begin{bmatrix}
c & 2\mathbf{y}^{\textsuperscript{T}} \\
\mathbf{x} & A+JB
\end{bmatrix}\right)=\{\lambda_{1},\lambda_{2},\ldots,\lambda_{m+1}\}
\quad \text{and} \quad \sigma(A-JB)=\{\lambda_{m+2},\ldots,\lambda_{n}\}.
\end{equation*}
Consequently, $\{\lambda_{1},\lambda_{2},\ldots,\lambda_{m+1}\}\in \mathbb{N}_{n}$. Thus, by Theorem \ref{Guo}, $\{\lambda_{1}+t,\lambda_{2}+t,\lambda_{3},\ldots,\lambda_{m+1}\}\in \mathbb{N}_{n}$. Let $X_{+t}$ be the nonnegative matrix established in Remark \ref{R1} such that $\sigma(X_{+t})=\{\lambda_{1}+t,\lambda_{2}+t, \lambda_{3},\ldots,\lambda_{m+1}\}$. Then,
\begin{align}\label{I4}
X_{+t} \ \geq \ \begin{bmatrix}
c & 2\mathbf{y}^{\textsuperscript{T}} \\
\mathbf{x} & A+JB
\end{bmatrix}.\end{align}
Let us partition $X_{+t}=\begin{bmatrix}
c_{+t} & \mathbf{y}^{^{\textsuperscript{T}}}_{+t} \\
\mathbf{x}_{+t} & \widehat{X}_{+t}
\end{bmatrix}$, where $\widehat{X}_{+t}$ is the $m\times m$ principal submatrix obtained to deleting the first row and first column of $X_{+t}$.
By \eqref{I4},
\begin{align*}
\widehat{X}_{+t}+A-JB\geq 0 \quad \text{and} \quad \widehat{X}_{+t}-A+JB\geq 0.
\end{align*}
Therefore,
\begin{align*}
C_{+t}=
\frac{1}{2}\begin{bmatrix}
\widehat{X}_{+t}+A-JB & 2\mathbf{x}_{+t} & \left(\widehat{X}_{+t}-A+JB\right)J \\ \\
\mathbf{y}_{+t}^{\textsuperscript{T}} & 2c_{+t} & \mathbf{y}_{+t}^{\textsuperscript{T}}J \\ \\
J\left(\widehat{X}_{+t}-A+JB\right) & 2J\mathbf{x}_{+t} & J\left(\widehat{X}_{+t}+A-JB\right)J
\end{bmatrix}.
\end{align*}
is a nonnegative centrosymmetric matrix with spectrum $\Lambda^{+}_{t}$. \\

Similarly, replacing $X_{+t}=\begin{bmatrix}
c_{+t} & \mathbf{y}^{\textsuperscript{T}}_{+t} \\
\mathbf{x}_{+t} & \widehat{X}_{+t}
\end{bmatrix}$ in \eqref{I4} by $X_{-t}=\begin{bmatrix}
c_{-t} & \mathbf{y}^{\textsuperscript{T}}_{-t} \\
\mathbf{x}_{-t} & \widehat{X}_{-t}
\end{bmatrix}$ with $\sigma(X_{-t})=\{\lambda_{1}+t,\lambda_{2}-t, \lambda_{3},\ldots,\lambda_{m+1}\}$, we obtain a nonnegative centrosymmetric matrix with spectrum $\Lambda^{-}_{t}$. \\ \\
$ii)$ If $\lambda_{2}\in \sigma(A-JB)$, then we write
\begin{equation*}
\sigma\left(\begin{bmatrix}
c & 2\mathbf{y}^{\textsuperscript{T}} \\
\mathbf{x} & A+JB
\end{bmatrix}\right)=\{\lambda_{1},\lambda_{3},\ldots,\lambda_{m+2}\} \ \text{and} \ \sigma(A-JB)=\{\lambda_{2},\lambda_{m+3},\ldots,\lambda_{n}\}.
\end{equation*}
Let ${\bf w}^{^{\textsuperscript{T}}}=\begin{bmatrix}w_{1} \cdots w_{m}\end{bmatrix}\neq 0$ such that $(A-JB){\bf w}=\lambda_{2}{\bf w}$. We define by $w_{k}=\max\limits_{i=1,\ldots,m}\{w_{i}\}$ and we assume without loss of generality that $w_{k}\geq 1$. By Theorem \ref{Brauer},
$A-JB+\left(\frac{t}{w_{k}}\right){\bf w}{\bf e}_{k}^{^{\textsuperscript{T}}}$ and
\begin{align*}
\begin{bmatrix}
c & 2\mathbf{y}^{\textsuperscript{T}} \\
\mathbf{x} & A+JB
\end{bmatrix}+t\begin{bmatrix} 1 \\ {\bf e} \end{bmatrix}\begin{bmatrix} 0 & {\bf e}_{k}^{\textsuperscript{T}}\end{bmatrix}=\begin{bmatrix}
c & 2\mathbf{y}^{\textsuperscript{T}}+t{\bf e}_{k}^{\textsuperscript{T}} \\
\mathbf{x} & A+JB+t{\bf e}{\bf e}_{k}^{\textsuperscript{T}}
\end{bmatrix}
\end{align*}
have spectrum $\{\lambda_{2}+t,\lambda_{m+3},\ldots,\lambda_{n}\}$ and $\{\lambda_{1}+t,\lambda_{3},\ldots,\lambda_{m+2}\}$ respectively, where $\begin{bmatrix}
c & 2\mathbf{y}^{\textsuperscript{T}}+t{\bf e}_{k}^{\textsuperscript{T}} \\
\mathbf{x} & A+JB+t{\bf e}{\bf e}_{k}^{\textsuperscript{T}}
\end{bmatrix}$ is a nonnegative matrix. Since
\begin{align*}
A+JB+t{\bf e}{\bf e}_{k}^{\textsuperscript{T}}+\left(A-JB+\left(\frac{t}{w_{k}}\right){\bf w}{\bf e}_{k}^{\textsuperscript{T}}\right) = 2A+t\left({\bf e}+\frac{{\bf w}}{w_{k}}\right){\bf e}_{k}^{\textsuperscript{T}} \geq 0
\intertext{and}
A+JB+t{\bf e}{\bf e}_{k}^{\textsuperscript{T}}-\left(A-JB+\left(\frac{t}{w_{k}}\right){\bf w}{\bf e}_{k}^{\textsuperscript{T}}\right) = 2JB+t\left({\bf e}-\frac{{\bf w}}{w_{k}}\right){\bf e}_{k}^{\textsuperscript{T}} \geq 0,
\end{align*}
then
\begin{align*}
C_{+t}=
\frac{1}{2}\begin{bmatrix}
2A+t\left({\bf e}+\frac{{\bf w}}{w_{k}}\right){\bf e}_{k}^{\textsuperscript{T}} & 2\mathbf{x} & \left(2JB+t\left({\bf e}-\frac{{\bf w}}{w_{k}}\right){\bf e}_{k}^{\textsuperscript{T}}\right)J
\\ \\
2\mathbf{y}^{\textsuperscript{T}}+t{\bf e}_{k}^{\textsuperscript{T}} & 2c & (2\mathbf{y}^{\textsuperscript{T}}+t{\bf e}_{k}^{\textsuperscript{T}})J \\ \\
J\left(2JB+t\left({\bf e}-\frac{{\bf w}}{w_{k}}\right){\bf e}_{k}^{\textsuperscript{T}}\right) & 2J\mathbf{x} & J\left(2A+t\left({\bf e}+\frac{{\bf w}}{w_{k}}\right){\bf e}_{k}^{\textsuperscript{T}}\right)J
\end{bmatrix}
\end{align*}
is a nonnegative centrosymmetric matrix with spectrum $\Lambda^{+}_{t}$. \\

Similarly, replacing $A-JB+\left(\frac{t}{w_{k}}\right){\bf w}{\bf e}_{k}^{\textsuperscript{T}}$ in the previous argument by $A-JB-\left(\frac{t}{w_{k}}\right){\bf w}{\bf e}_{k}^{\textsuperscript{T}}$, which has spectrum $\{\lambda_{2}-t,\lambda_{m+3},\ldots,\lambda_{n}\}$, we obtain a nonnegative centrosymmetric matrix with spectrum $\Lambda^{-}_{t}$.
\end{proof}
\subsection{Perturbing non-real eigenvalues of nonnegative centrosymmetric matrices}
Now, we establish a property of non-real eigenvalues of nonnegative centrosymmetric matrices, which is shown in the following theorem.
\begin{theorem}\label{PertComp}
If $\Lambda=\{\lambda_{1},a+ib,a-ib,\lambda_{4},\ldots,\lambda_{n}\}\in \mathcal{CR}_{n}$, where $a$ is a real number, $b>0$ and $i=\sqrt{-1}$, then for all $t\geq 0$, the lists
\begin{align*}
\Lambda^{-}_{t}&=\{\lambda_{1}+2t,a-t+ib,a-t-ib,\lambda_{4},\ldots,\lambda_{n}\}\in \mathcal{CR}_{n} \\ \\
\Lambda^{+}_{t}&=\{\lambda_{1}+\delta t,a+t+ib,a+t-ib,\lambda_{4},\ldots,\lambda_{n}\}\in \mathcal{CR}_{n},
\end{align*}
where $\delta\in \{2,4\}$.
\end{theorem}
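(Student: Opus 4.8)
The plan is to follow the architecture of the proof of Theorem \ref{Guo1}, splitting on the parity of $n$ and, within each parity, on where the conjugate pair $a\pm ib$ sits in the Cantoni decomposition. First I would use Theorem \ref{SFC} to assume $C\in\mathcal{CS}_{\lambda_{1}}$, and then apply Theorem \ref{Cen 1} together with Lemma \ref{L5} to reduce $C$ to its blocks: a \emph{Perron block} $M$ --- equal to $A+JB$ when $n$ is even and to $\bigl[\begin{smallmatrix} c & 2\mathbf y^{\textsuperscript{T}} \\ \mathbf x & A+JB \end{smallmatrix}\bigr]$ when $n$ is odd --- which lies in $\mathcal{CS}_{\lambda_{1}}$ and carries the constant vector as its Perron eigenvector, and a \emph{skew block} $S=A-JB$. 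Since the eigenvectors attached to $\sigma(M)$ are symmetric while those attached to $\sigma(S)$ are skew-symmetric, the pair $a\pm ib$ belongs to exactly one of $\sigma(M)$, $\sigma(S)$, and I would treat these as situations $i)$ and $ii)$.

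If $a\pm ib\in\sigma(M)$, then $\sigma(M)$ is a list realized by the \emph{nonnegative} matrix $M$ containing $\lambda_{1}$ and $a\pm ib$, so Theorems \ref{Laffey} and \ref{Guo2} apply to $M$ verbatim. They produce nonnegative matrices $X_{-t}$, $X_{+t}$ realizing $\{\lambda_{1}+2t,a-t\pm ib,\ldots\}$ and $\{\lambda_{1}+4t,a+t\pm ib,\ldots\}$, and by Remark \ref{R1} each dominates $M$ entrywise. Writing $X$ for either one, $X\geq A+JB$ together with $A,JB\geq0$ forces $X+S\geq0$ and $X-S\geq0$, whence
\begin{equation*}
C_{\pm t}=\frac{1}{2}\begin{bmatrix} X+S & (X-S)J \\ J(X-S) & J(X+S)J\end{bmatrix}
\end{equation*}
(with the bordered variant for odd $n$, obtained by partitioning off the central row and column of $X$) is nonnegative, centrosymmetric, and cospectral to $\mathrm{diag}(X,S)$. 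This branch delivers $\Lambda^{-}_{t}$ with the exact shift $+2t$ and $\Lambda^{+}_{t}$ with $\delta=4$.

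If $a\pm ib\in\sigma(S)$, Theorems \ref{Laffey} and \ref{Guo2} no longer apply, since $S$ need not be nonnegative, so I would perturb by hand, as in case $ii)$ of Theorem \ref{Guo1} but now with rank two. Taking an eigenvector $\mathbf u+i\mathbf v$ of $S$ for $a+ib$ and setting $V=[\mathbf u\ \mathbf v]$, one has $SV=VR$ with $R=\bigl[\begin{smallmatrix} a & b \\ -b & a\end{smallmatrix}\bigr]$; a rank-two update $S+VG$ extending Theorem \ref{Brauer}, chosen so that $GV=\pm tI_{2}$, moves $a\pm ib$ to $(a\pm t)\pm ib$ and fixes every other eigenvalue of $S$. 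Simultaneously $M+\mathbf e\mathbf g^{\textsuperscript{T}}$ with $\mathbf e^{\textsuperscript{T}}\mathbf g=\delta t$ moves only $\lambda_{1}$ to $\lambda_{1}+\delta t$. Reassembling through Theorem \ref{Cen 1} then requires only
\begin{equation*}
A+\tfrac12\bigl(\mathbf e\mathbf g^{\textsuperscript{T}}+VG\bigr)\geq0,\qquad JB+\tfrac12\bigl(\mathbf e\mathbf g^{\textsuperscript{T}}-VG\bigr)\geq0,
\end{equation*}
and I would choose $G$ supported on as few columns as possible and $\mathbf g\geq0$ just large enough to dominate the negative entries that $VG$ injects; this is the step that yields the improved $\delta=2$ for $\Lambda^{+}_{t}$ (and the shift $+2t$ for $\Lambda^{-}_{t}$). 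Since $\delta=2$ realizability implies $\delta=4$ realizability by Corollary \ref{CP}$(i)$, the combined conclusion is $\delta\in\{2,4\}$.

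The main obstacle is precisely this sign control in situation $ii)$: unlike the Perron vector $\mathbf e\geq0$, the real and imaginary parts $\mathbf u,\mathbf v$ are sign-indefinite, so the rank-two correction $VG$ can introduce negative entries into \emph{both} reassembled blocks, and these must be absorbed by the single nonnegative correction $\mathbf e\mathbf g^{\textsuperscript{T}}$ uniformly in $t\geq0$. Because the correction scales linearly in $t$ while $A,JB$ are fixed, wherever $A$ (resp.\ $JB$) vanishes the correction must itself be nonnegative, a genuine combinatorial constraint; quantifying how large $\mathbf e^{\textsuperscript{T}}\mathbf g$ must be for this to hold for all $t$ is what pins down the admissible $\delta$. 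The $\Lambda^{-}_{t}$ case is the gentler one, since lowering the real part draws the pair away from the Perron root and is compatible with the smaller shift, while the odd-$n$ subcases of both situations are handled by carrying the same updates on the bordered Perron block, exactly as in Theorem \ref{Guo1}.
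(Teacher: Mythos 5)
Your architecture matches the paper's exactly: reduce to $C\in\mathcal{CS}_{\lambda_{1}}$ via Theorem \ref{SFC}, split on parity and on whether $a\pm ib$ lies in the Perron block $M$ or in $S=A-JB$, handle the first situation by applying Theorems \ref{Laffey} and \ref{Guo2} to $M$ and invoking Remark \ref{R1} to reassemble (giving $\delta=4$ there), and handle the second by a rank-two perturbation of $S$ compensated by a perturbation of $M$ along the all-ones vector. Situation $i)$ is complete and correct as you state it, including the observation that $X\geq A+JB$ forces $X\pm S\geq 0$.

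The gap is in situation $ii)$, and you name it yourself without closing it: you leave open ``quantifying how large ${\bf e}^{\textsuperscript{T}}{\bf g}$ must be,'' which is precisely the content of the claimed $\delta=2$. The paper closes this with the device of Guo and Guo: writing $P=\begin{bmatrix}{\bf u} & {\bf v} & \cdots\end{bmatrix}$ with $P\Psi P^{-1}=A-JB$, one chooses the pair of rows maximizing the minor $\Delta=u_{i}v_{j}-u_{j}v_{i}$ (without loss of generality rows $1,2$) and sets $x_{1}=\tfrac{t}{\Delta}v_{2}$, $x_{2}=-\tfrac{t}{\Delta}v_{1}$, $y_{1}=-\tfrac{t}{\Delta}u_{2}$, $y_{2}=\tfrac{t}{\Delta}u_{1}$. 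The resulting correction $P({\bf e}_{1}X^{\textsuperscript{T}}+{\bf e}_{2}Y^{\textsuperscript{T}})P^{-1}$ is supported on columns $1$ and $2$ only, and every nonzero entry equals $\tfrac{t}{\Delta}$ times one of the minors $u_{i}v_{2}-u_{2}v_{i}$ or $u_{i}v_{1}-u_{1}v_{i}$, hence is bounded by $t$ in absolute value. Consequently $t{\bf e}({\bf e}_{1}^{\textsuperscript{T}}+{\bf e}_{2}^{\textsuperscript{T}})\pm P({\bf e}_{1}X^{\textsuperscript{T}}+{\bf e}_{2}Y^{\textsuperscript{T}})P^{-1}$ is itself entrywise nonnegative, so both reassembled blocks $2A+\cdots$ and $2JB+\cdots$ are nonnegative with no help from $A$ or $JB$; the ``combinatorial constraint'' you worry about at the zeros of $A$ or $JB$ never arises, and the cost on the Perron root is exactly ${\bf e}^{\textsuperscript{T}}\cdot t({\bf e}_{1}+{\bf e}_{2})=2t$, which is where $\delta=2$ (and the shift $+2t$ for $\Lambda^{-}_{t}$) comes from. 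Without this normalization your $G$ and ${\bf g}$ are unspecified and the claimed constants are not established. A minor further point: your heuristic that the $\Lambda^{-}_{t}$ case is ``gentler'' is not borne out; in the paper both signs in situation $ii)$ follow from the identical estimate, differing only in the sign in front of the rank-two term.
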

\begin{proof}
Let $t\geq 0$. We use a similar argument as in Theorem \ref{Guo1} and we split the proof into two cases: \\
{\it Case 1, even $n$}: Let $C=\begin{bmatrix} A & JBJ \\ B & JAJ \end{bmatrix}\in \mathcal{CS}_{\lambda_{1}}$ be a nonnegative centrosymmetric matrix with spectrum $\Lambda$. We know that $C$ is orthogonally similar to
\begin{equation*}
\begin{bmatrix} A+JB & \\ & A-JB\end{bmatrix},
\end{equation*}
where $A+JB\in \mathcal{CS}_{\lambda_{1}}$. Therefore, we have two situations: \\
$i)$ If $a\pm ib\in \sigma(A+JB)$, then we write
\begin{equation*}
\sigma(A+JB)=\{\lambda_{1},a+ib,a-ib,\lambda_{4},\ldots,\lambda_{\frac{n}{2}}\}
\ \text{and} \ \sigma(A-JB)=\{\lambda_{\frac{n}{2}+1},\ldots,\lambda_{n}\}.
\end{equation*}
At this situation, to show that $\Lambda^{-}_{t}\in \mathcal{CR}_{n}$ and $\Lambda^{+}_{t}\in \mathcal{CR}_{n}$ (being $\delta=4$), we proceed of a similar way as in Theorem \ref{Guo1}, {\it Case 1, even $n$}, when $\lambda_{2}\in \sigma(A+JB)$, taking into account Theorems \ref{Laffey} and \ref{Guo2}. \\ \\
$ii)$ If $a\pm ib\in \sigma(A-JB)$, then we write
\begin{equation*}
\sigma(A+JB)=\{\lambda_{1},\lambda_{4},\ldots,\lambda_{\frac{n}{2}+2}\}
\quad \text{and} \quad \sigma(A-JB)=\{a+ib,a-ib,\lambda_{\frac{n}{2}+3},\ldots,\lambda_{n}\}.
\end{equation*}
We follow the ideas of the proof of Theorem 1.2 in \cite{Guo1}. Let us write the Jordan canonical form of $A-JB$ as follows:
\begin{equation*}
\Psi=\begin{bmatrix}
a & b \\
-b & a & & \ast \\
& & \lambda_{\frac{n}{2}+3} \\
& & & \ddots \\
& & & & \lambda_{n}
\end{bmatrix}
\end{equation*}
Let $P=\begin{bmatrix}{\bf u} & {\bf v} & {\bf w_{3}} & \cdots & {\bf w_{\frac{n}{2}}}\end{bmatrix}$ be an $\frac{n}{2}\times \frac{n}{2}$ non-singular real matrix such that $P\Psi P^{-1}=A-JB$, where
\begin{equation*}
{\bf u}^{\textsuperscript{T}}=\begin{bmatrix}u_{1} \ \cdots \ u_{\frac{n}{2}}\end{bmatrix} \quad \text{and} \quad {\bf v}^{\textsuperscript{T}}=\begin{bmatrix}v_{1} \ \cdots \ v_{\frac{n}{2}}\end{bmatrix}
\end{equation*}
are real vectors such that ${\bf u}\pm i{\bf v}$ are eigenvectors of $A-JB$ corresponding to the eigenvalues $a\pm ib$. Now, we consider
\begin{equation*}
det(i,j)=\begin{vmatrix}u_{i} & v_{i} \\ u_{j} & v_{j}\end{vmatrix}=u_{i}v_{j}-u_{j}v_{i}, \quad \text{for any} \quad 1\leq i, j\leq \frac{n}{2}.
\end{equation*}
We can assume, without loss of generality that
\begin{equation*}
\Delta=det(1,2)=\max\limits_{1\leq i, j\leq \frac{n}{2}}det(i,j).
\end{equation*}
Since $P$ is non-singular, $\Delta>0$. Let
\begin{align*}
X^{\textsuperscript{T}}&=\begin{bmatrix}x_{1} & x_{2} & 0 & \cdots & 0\end{bmatrix}P=\begin{bmatrix}t & 0 & \ast & \cdots & \ast\end{bmatrix}, \\ \\
Y^{\textsuperscript{T}}&=\begin{bmatrix}y_{1} & y_{2} & 0 & \cdots & 0\end{bmatrix}P=\begin{bmatrix}0 & t & \ast & \cdots & \ast\end{bmatrix},
\end{align*}
where
\begin{align*}
x_{1}=\frac{t}{\Delta}v_{2}, \quad \quad x_{2}=-\frac{t}{\Delta}v_{1}, \quad \quad y_{1}=-\frac{t}{\Delta}u_{2}, \quad \quad y_{2}=\frac{t}{\Delta}u_{1}.
\end{align*}
Then,
\begin{align}\label{Mat}
P({\bf e}_{1}X^{\textsuperscript{T}}+{\bf e}_{2}Y^{\textsuperscript{T}})P^{-1} =
\begin{bmatrix}
u_{1}x_{1}+v_{1}y_{1} & u_{1}x_{2}+v_{1}y_{2} & 0 & \cdots & 0 \\
u_{2}x_{1}+v_{2}y_{1} & u_{2}x_{2}+v_{2}y_{2} & 0 & \cdots & 0 \\
\vdots & \vdots & \vdots & & \vdots \\
u_{\frac{n}{2}}x_{1}+v_{\frac{n}{2}}y_{1} & u_{\frac{n}{2}}x_{2}+v_{\frac{n}{2}}y_{2} & 0 & \cdots & 0
\end{bmatrix}.
\end{align}
Therefore,
\begin{equation*}
P(\Psi-{\bf e}_{1}X^{\textsuperscript{T}}-{\bf e}_{2}Y^{\textsuperscript{T}})P^{-1}=A-JB-P({\bf e}_{1}X^{\textsuperscript{T}}+{\bf e}_{2}Y^{\textsuperscript{T}})P^{-1},
\end{equation*}
has spectrum $\{a-t+ib,a-t-ib,\lambda_{\frac{n}{2}+3},\ldots,\lambda_{n}\}$. Note also that in \eqref{Mat} we have
{\footnotesize
\begin{align}\label{Imp1}
\begin{cases}
\vert u_{i}x_{1}+v_{i}y_{1}\vert &= \frac{t}{\Delta}\vert u_{i}v_{2}-u_{2}v_{i}\vert = \frac{t}{\Delta}\vert det(i,2)\vert \leq \frac{t}{\Delta}\Delta = t, \\ \\
\vert u_{i}x_{2}+v_{i}y_{2}\vert &= \frac{t}{\Delta}\vert u_{i}v_{1}-u_{1}v_{i}\vert = \frac{t}{\Delta}\vert det(i,1)\vert \leq \frac{t}{\Delta}\Delta = t, \quad i=1,\ldots,\frac{n}{2}.
\end{cases}
\end{align}
}
On the other hand, $A+JB+t{\bf e}({\bf e}_{1}^{\textsuperscript{T}}+{\bf e}_{2}^{\textsuperscript{T}})$ is a nonnegative matrix with spectrum $\{\lambda_{1}+2t,\lambda_{4},\ldots,\lambda_{\frac{n}{2}+2}\}$. From \eqref{Mat} and \eqref{Imp1}
\begin{align*}
A+JB+t{\bf e}({\bf e}_{1}^{\textsuperscript{T}}+{\bf e}_{2}^{\textsuperscript{T}})&+\left(A-JB-P({\bf e}_{1}X^{\textsuperscript{T}}+{\bf e}_{2}Y^{\textsuperscript{T}})P^{-1}\right) = \\ \\
&= 2A+t{\bf e}({\bf e}_{1}^{\textsuperscript{T}}+{\bf e}_{2}^{\textsuperscript{T}})-P({\bf e}_{1}X^{\textsuperscript{T}}+{\bf e}_{2}Y^{\textsuperscript{T}})P^{-1}\geq 0
\end{align*}
and
\begin{align*}
A+JB+t{\bf e}({\bf e}_{1}^{\textsuperscript{T}}+{\bf e}_{2}^{\textsuperscript{T}})-&\left(A-JB-P({\bf e}_{1}X^{\textsuperscript{T}}+{\bf e}_{2}Y^{\textsuperscript{T}})P^{-1}\right) = \\ \\ &= 2JB+t{\bf e}({\bf e}_{1}^{\textsuperscript{T}}+{\bf e}_{2}^{\textsuperscript{T}})+P({\bf e}_{1}X^{\textsuperscript{T}}+{\bf e}_{2}Y^{\textsuperscript{T}})P^{-1}\geq 0,
\end{align*}
we obtain that
{\scriptsize
\begin{align*}
C_{-t}=
\frac{1}{2}\begin{bmatrix}
2A+t{\bf e}({\bf e}_{1}^{\textsuperscript{T}}+{\bf e}_{2}^{\textsuperscript{T}})-P({\bf e}_{1}X^{\textsuperscript{T}}+{\bf e}_{2}Y^{\textsuperscript{T}})P^{-1} & \left(2JB+t{\bf e}({\bf e}_{1}^{\textsuperscript{T}}+{\bf e}_{2}^{\textsuperscript{T}})+P({\bf e}_{1}X^{\textsuperscript{T}}+{\bf e}_{2}Y^{\textsuperscript{T}})P^{-1}\right)J \\ \\
J\left(2JB+t{\bf e}({\bf e}_{1}^{\textsuperscript{T}}+{\bf e}_{2}^{\textsuperscript{T}})+P({\bf e}_{1}X^{\textsuperscript{T}}+{\bf e}_{2}Y^{\textsuperscript{T}})P^{-1}\right) & J\left(2A+t{\bf e}({\bf e}_{1}^{\textsuperscript{T}}+{\bf e}_{2}^{\textsuperscript{T}})-P({\bf e}_{1}X^{\textsuperscript{T}}+{\bf e}_{2}Y^{\textsuperscript{T}})P^{-1}\right)J
\end{bmatrix}
\end{align*}
}
is a nonnegative centrosymmetric matrix with spectrum $\Lambda^{-}_{t}$. \\

Similarly, replacing $A-JB-P({\bf e}_{1}X^{\textsuperscript{T}}+{\bf e}_{2}Y^{\textsuperscript{T}})P^{-1}$ in the previous argument by $A-JB+P({\bf e}_{1}X^{\textsuperscript{T}}+{\bf e}_{2}Y^{\textsuperscript{T}})P^{-1}$, which has spectrum $\{a+t+ib,a+t-ib,\lambda_{\frac{n}{2}+3},\ldots,\lambda_{n}\}$, we obtain a nonnegative centrosymmetric matrix with spectrum $\Lambda^{+}_{t}$ (being $\delta=2$).
\\ \\
{\it Case 2, odd $n=2m+1$}: We reason in a similar way to the previous case. Since $\Lambda \in \mathcal{CR}_{n}$, let $C=\begin{bmatrix}
A & \mathbf{x} & JBJ \\
\mathbf{y}^{\textsuperscript{T}} & c & \mathbf{y}^{\textsuperscript{T}}J \\
B & J\mathbf{x} & JAJ
\end{bmatrix}\in \mathcal{CS}_{\lambda_{1}}$ be a nonnegative centrosymmetric matrix with spectrum $\Lambda$. From Theorem \ref{Cen 1} and Lemma \ref{L5}, $C$ is cospectral to
$\begin{bmatrix}
c & 2\mathbf{y}^{\textsuperscript{T}} & \\
\mathbf{x} & A+JB & \\
& & A-JB
\end{bmatrix}$, where $\begin{bmatrix}
c & 2\mathbf{y}^{\textsuperscript{T}} \\
\mathbf{x} & A+JB
\end{bmatrix}\in \mathcal{CS}_{\lambda_{1}}$. Therefore, we have two situations: \\ \\
$i)$ If $a\pm ib\in \sigma\left(\begin{bmatrix}
c & 2\mathbf{y}^{\textsuperscript{T}} \\
\mathbf{x} & A+JB
\end{bmatrix}\right)$, then we write
\begin{align*}
\sigma\left(\begin{bmatrix}
c & 2\mathbf{y}^{\textsuperscript{T}} \\
\mathbf{x} & A+JB
\end{bmatrix}\right)=\{\lambda_{1},a+ib,a-ib,\lambda_{4},\ldots,\lambda_{m+1}\},
\end{align*}
and $\sigma(A-JB)=\{\lambda_{m+2},\ldots,\lambda_{n}\}$. To show that $\Lambda^{-}_{t}\in \mathcal{CR}_{n}$ and $\Lambda^{+}_{t}\in \mathcal{CR}_{n}$ (being $\delta=4$), we proceed of a similar way as in Theorem \ref{Guo1}, {\it Case 2, odd $n$}, when $\lambda_{2}\in \sigma\left(\begin{bmatrix}
c & 2\mathbf{y}^{\textsuperscript{T}} \\
\mathbf{x} & A+JB
\end{bmatrix}\right)$, taking into account Theorems \ref{Laffey} and \ref{Guo2}. \\ \\
$ii)$ If $a\pm ib\in \sigma(A-JB)$, then we write
\begin{equation*}
\sigma\left(\begin{bmatrix}
c & 2\mathbf{y}^{\textsuperscript{T}} \\
\mathbf{x} & A+JB
\end{bmatrix}\right)=\{\lambda_{1},\lambda_{4},\ldots,\lambda_{m+3}\}
\end{equation*}
and $\sigma(A-JB)=\{a+ib,a-ib,\lambda_{m+4},\ldots,\lambda_{n}\}$. Similarly as in Case 1, even $n$, when $a\pm ib\in A-JB$, we can obtain the matrix
\begin{equation*}
A-JB-P({\bf e}_{1}X^{\textsuperscript{T}}+{\bf e}_{2}Y^{\textsuperscript{T}})P^{-1},
\end{equation*}
with spectrum $\{a-t+ib,a-t-ib,\lambda_{m+4},\ldots,\lambda_{n}\}$, where $P({\bf e}_{1}X^{\textsuperscript{T}}+{\bf e}_{2}Y^{\textsuperscript{T}})P^{-1}$ is as in \eqref{Mat}, when replacing $\frac{n}{2}$ by $m$ and its entries satisfy \eqref{Imp1}. On the other hand,
\begin{align*}
\begin{bmatrix}
c & 2\mathbf{y}^{\textsuperscript{T}} \\
\mathbf{x} & A+JB
\end{bmatrix}+t\begin{bmatrix} 1 \\ {\bf e} \end{bmatrix}\begin{bmatrix} 0 & {\bf e}_{1}^{\textsuperscript{T}}+{\bf e}_{2}^{\textsuperscript{T}}\end{bmatrix}=\begin{bmatrix}
c & 2\mathbf{y}^{\textsuperscript{T}}+t({\bf e}_{1}^{\textsuperscript{T}}+{\bf e}_{2}^{\textsuperscript{T}}) \\
\mathbf{x} & A+JB+t{\bf e}({\bf e}_{1}^{\textsuperscript{T}}+{\bf e}_{2}^{\textsuperscript{T}})
\end{bmatrix}
\end{align*}
is a nonnegative matrix with spectrum $\{\lambda_{1}+2t,\lambda_{4},\ldots,\lambda_{m+3}\}$. Since
\begin{align*}
A+JB+t{\bf e}({\bf e}_{1}^{\textsuperscript{T}}+{\bf e}_{2}^{\textsuperscript{T}})&+\left(A-JB-P({\bf e}_{1}X^{\textsuperscript{T}}+{\bf e}_{2}Y^{\textsuperscript{T}})P^{-1}\right) = \\ \\
&= 2A+t{\bf e}({\bf e}_{1}^{\textsuperscript{T}}+{\bf e}_{2}^{\textsuperscript{T}})-P({\bf e}_{1}X^{\textsuperscript{T}}+{\bf e}_{2}Y^{\textsuperscript{T}})P^{-1}\geq 0
\end{align*}
and
\begin{align*}
A+JB+t{\bf e}({\bf e}_{1}^{\textsuperscript{T}}+{\bf e}_{2}^{\textsuperscript{T}})-&\left(A-JB-P({\bf e}_{1}X^{\textsuperscript{T}}+{\bf e}_{2}Y^{\textsuperscript{T}})P^{-1}\right) = \\ \\ &= 2JB+t{\bf e}({\bf e}_{1}^{\textsuperscript{T}}+{\bf e}_{2}^{\textsuperscript{T}})+P({\bf e}_{1}X^{\textsuperscript{T}}+{\bf e}_{2}Y^{\textsuperscript{T}})P^{-1}\geq 0,
\end{align*}
we obtain that
{\scriptsize
\begin{align*}
C_{-t}=
\frac{1}{2}\begin{bmatrix}
2A+t{\bf e}({\bf e}_{1}^{\textsuperscript{T}}+{\bf e}_{2}^{\textsuperscript{T}})-P({\bf e}_{1}X^{\textsuperscript{T}}+{\bf e}_{2}Y^{\textsuperscript{T}})P^{-1} & 2\mathbf{x} &  \left(2JB+t{\bf e}({\bf e}_{1}^{\textsuperscript{T}}+{\bf e}_{2}^{T})+P({\bf e}_{1}X^{\textsuperscript{T}}+{\bf e}_{2}Y^{\textsuperscript{T}})P^{-1}\right)J \\ \\
2\mathbf{y}^{\textsuperscript{T}}+t{\bf e}_{k}^{\textsuperscript{T}} & 2c & (2\mathbf{y}^{\textsuperscript{T}}+t{\bf e}_{k}^{\textsuperscript{T}})J \\ \\
J\left(2JB+t{\bf e}({\bf e}_{1}^{\textsuperscript{T}}+{\bf e}_{2}^{\textsuperscript{T}})+P({\bf e}_{1}X^{\textsuperscript{T}}+{\bf e}_{2}Y^{\textsuperscript{T}})P^{-1}\right) & 2J\mathbf{x} & J\left(2A+t{\bf e}({\bf e}_{1}^{\textsuperscript{T}}+{\bf e}_{2}^{\textsuperscript{T}})-P({\bf e}_{1}X^{\textsuperscript{T}}+{\bf e}_{2}Y^{\textsuperscript{T}})P^{-1}\right)J
\end{bmatrix}
\end{align*}
}
is a nonnegative centrosymmetric matrix with spectrum $\Lambda^{-}_{t}$. \\

Similarly, replacing $A-JB-P({\bf e}_{1}X^{\textsuperscript{T}}+{\bf e}_{2}Y^{\textsuperscript{T}})P^{-1}$ in the previous argument by $A-JB+P({\bf e}_{1}X^{\textsuperscript{T}}+{\bf e}_{2}Y^{\textsuperscript{T}})P^{-1}$, which has spectrum $\{a+t+ib,a+t-ib,\lambda_{\frac{n}{2}+3},\ldots,\lambda_{n}\}$, we obtain a nonnegative centrosymmetric matrix with spectrum $\Lambda^{+}_{t}$ (being $\delta=2$).
\end{proof}

\bigskip
\begin{flushleft}
\textbf{Roberto C. D\'{i}az} \\
\texttt{Departamento de Matem\'aticas}, \\
\texttt{Universidad de La Serena} \\
\texttt{Cisternas 1200} \\
\texttt{La Serena, Chile} \\
\texttt{E-mails}: \textit{rdiaz01@ucn.cl}, \ \textit{roberto.diazm@userena.cl}
\end{flushleft}
\bigskip
\begin{flushleft}
\textbf{Ana I. Julio} \\
\texttt{Departamento de Matem\'aticas}, \\
\texttt{Universidad Cat\'olica del Norte} \\
\texttt{Avenida Angamos 0610}, \\
\texttt{Antofagasta, Chile} \\
\texttt{E-mails}: \textit{ajulio@ucn.cl}
\end{flushleft}
\bigskip
\begin{flushleft}
\textbf{Yankis R. Linares} \\
\texttt{Departamento de Matem\'aticas}, \\
\texttt{Universidad Cat\'olica del Norte} \\
\texttt{Avenida Angamos 0610}, \\
\texttt{Antofagasta, Chile} \\
\texttt{E-mails}: \textit{yankis.linares@ce.ucn.cl}
\end{flushleft}
\end{document}